\def\ps@pprintTitle{%
 \let\@oddhead\@empty
 \let\@evenhead\@empty
 \def\@oddfoot{}%
 \let\@evenfoot\@oddfoot}
\newcommand{\mvec}{\mathop{\operator@font vec}}
\renewcommand{\vec}[1]{\ensuremath{\boldsymbol{#1}}} % bold vectors
\newcommand{\bmx}{\begin{bmatrix}}
\newcommand{\emx}{\end{bmatrix}}
\newcommand{\bsm}{\left[\begin{smallmatrix}}
\newcommand{\esm}{\end{smallmatrix}\right]}
\newcommand{\rrank}{\mathop{\operator@font rank}}
\newcommand{\diag}{\mathop{\operator@font diag}}
\newcommand{\tvec}{\mathop{\operator@font vec}}
\newcommand{\MylinkColor}{blue}	%we change the color later in black of course
\def\scrA{\mathscr{A}}
\def\calG{\mathcal{G}}
\def\calJ{\mathcal{J}}
\def\calL{\mathcal{L}}
\def\calQ{\mathcal{Q}}
\def\calT{\mathcal{T}}
\def\calX{\mathcal{X}}
\def\bfa{\mathbf{a}}
\def\bfc{\mathbf{c}}
\def\bff{\mathbf{f}}
\def\bfg{\mathbf{g}}
\def\bfh{\mathbf{h}}
\def\bfu{\mathbf{u}}
\def\bfv{\mathbf{v}}
\def\bfw{\mathbf{w}}
\def\bfx{\mathbf{x}}
\def\bfy{\mathbf{y}}
\def\bfz{\mathbf{z}}
\def\bfA{\mathbf{A}}
\def\bfD{\mathbf{D}}
\def\bfH{\mathbf{H}}
\def\bfJ{\mathbf{J}}
\def\bfM{\mathbf{M}}
\def\bfU{\mathbf{U}}
\def\bfV{\mathbf{V}}
\def\bfW{\mathbf{W}}
\def\bfZ{\mathbf{Z}}
\def\bbR{\mathbb{R}}
\newtheorem{theorem}{Theorem} 
\newtheorem{corollary}{Corollary} 
\newtheorem{lemma}{Lemma} 
\newtheorem{proposition}{Proposition} 
\newtheorem{remark}{Remark} 
\newtheorem{example}{Example}
\def\TDIS{\calJ}
\def\TVVU{\calQ}
\def\LDIS{\calL_\calJ}
\def\LVVU{\calL_\calQ}
\def\VDM{\bfA}
\newcommand\HomT[1]{\Psi^{(#1)}}
\newcommand\mmult[1]{\bullet_{#1}}
\begin{document}
\begin{frontmatter}
\author[c]{Konstantin Usevich\corref{cor}}\ead{konstantin.usevich@univ-lorraine.fr}
\author[b]{Philippe Dreesen}\ead{philippe.dreesen@vub.ac.be}
\author[b]{Mariya Ishteva}\ead{mariya.ishteva@vub.ac.be}
\cortext[cor]{Corresponding author}
\address[c]{Universit\'e de Lorraine, CNRS, CRAN, F-54000 Nancy, France}
\address[b]{Vrije Universiteit Brussel (VUB), Department VUB-ELEC, Brussels, Belgium}
\title{Decoupling multivariate polynomials:\\interconnections between tensorizations}

\begin{abstract}
Decoupling multivariate polynomials is useful for obtaining an insight into the workings of a nonlinear mapping, performing parameter reduction, or approximating nonlinear functions. 
Several different tensor-based approaches have been proposed independently for this task, involving different tensor representations of the functions, and ultimately leading to a canonical polyadic decomposition.

We first show that the involved tensors are related by a linear transformation, and that their CP decompositions and uniqueness properties are closely related.
This connection provides a way to better assess which of the methods should be favored in certain problem settings, and may be a starting point to unify the two approaches. Second, we show that taking into account the previously ignored intrinsic structure in the tensor decompositions improves the uniqueness properties of the decompositions and thus enlarges the applicability range of the methods.
\end{abstract}

\begin{keyword}
polynomial decoupling, tensors, %quasi-Hankel matrices, system identification, 
canonical polyadic decomposition, coupled tensor decomposition,
{ tensorization, Waring decomposition} 
%Only 6 keywords allowed: removed "tensorization, structured data fusion"

\MSC[2010] 12E05; 15A21; 15A69
\end{keyword}
\end{frontmatter}

%\linenumbers

\section{Introduction}
Representing a nonlinear function in a simpler way can provide an insight into its inner workings, reduce the parametric complexity, or facilitate function approximation. 
{One of the successful examples are tensor decompositions, such as the canonical polyadic (CP) decomposition that can be viewed as a decomposition of functions into a sum of separable functions \cite{Comon14ISPM-Tensors}.
Tensor decompositions found many applications in signal/image processing, chemometrics, physics, machine learning, to name a few \cite{kolda2009tdaa,Cichocki15:review,Ng2017igarss,sidiropoulos2017tensor,Acar.etal15PI-Data}.
In these applications, tensors either appear naturally due to multi-dimensionality of data \cite{Chen2015TV}, or the data can be tensorized, i.e., a higher-order tensor is constructed from data \cite{debals2015}.
  }

{ In this paper, we focus on }
%For this reason,
the task of decoupling a set of polynomial vector functions, that is, decomposing a set of multivariate real polynomials into linear combinations of univariate polynomials in linear forms of the input variables.
{This task has attracted a spark of research attention over the last years, motivated by several applications, such} as  system identification \cite{dreesen2014i2mtc,dreesen2016isma,tiels2013fctdpripwm,Schoukens.Rolain12TIM-Cross,Schoukens.etal14conf-Identification,fakhrizadeh2018mssp}, approximation theory \cite{Logan.Shepp75D-Optimal,Lin.Pinkus93JoAT-Fundamentality,Oskolkov02SMJ-Representations}, { and  neural networks} \cite{Shin.Ghosh96IToNN-Ridge}. 
%It has also been applied successfully in the field of 
%}
Restricting { polynomial decoupling} to a single homogeneous polynomial is equivalent to the well-known Waring decomposition \cite{iarrobino-kanev1999,landsberg2012tensorsgeometry}, but { some} generalizations to non-homogeneous polynomials or joint Waring decompositions are studied as well \cite{Bialynicki-Birula.Schinzel08CM-Representations,schinzel2002} and \cite{carlini2003waringseveralforms,Dreesen.etal14-Decoupling,tiels2013fctdpripwm}.
%The same type of decomposition appears in other fields, such as approximation theory \cite{Logan.Shepp75D-Optimal,Lin.Pinkus93JoAT-Fundamentality,Oskolkov02SMJ-Representations} and polynomial neural networks \cite{Shin.Ghosh96IToNN-Ridge}. 
%It has also been applied successfully in the field of system identification \cite{dreesen2014i2mtc,dreesen2016isma,tiels2013fctdpripwm,Schoukens.Rolain12TIM-Cross,Schoukens.etal14conf-Identification,fakhrizadeh2018mssp}. 

{ Several tensor-based approaches were proposed for computing a decoupled representation of a given function }
%The polynomial decoupling problem has led to several tensor-based solution methods 
\cite{Schoukens.Rolain12TIM-Cross,Schoukens.etal14conf-Identification,Tiels.Schoukens14conf-coupled,VanMulders.etal14conf-Identification,Dreesen.etal14-Decoupling}. 
These { solutions} can be categorized into two classes. 
The methods \cite{Schoukens.Rolain12TIM-Cross,Schoukens.etal14conf-Identification,Tiels.Schoukens14conf-coupled,VanMulders.etal14conf-Identification} build a tensor from the polynomial coefficients, whereas the method of \cite{Dreesen.etal14-Decoupling} builds a tensor from the Jacobian matrices of the functions, evaluated at a set of sampling points. 
Ultimately, all methods boil down to a canonical polyadic decomposition (CP decomposition) of the constructed tensor to retrieve a decoupled representation in which the nonlinearities occur as univariate polynomial mappings.  

The benefit of using a tensor-based approach for decoupling is twofold. 
First, `tensorization' procedures often lead to (essentially) uniquely decomposable tensors \cite{debals2015}, i.e., ensuring that 
identifiable structures can be retrieved. 
%the parameters of identifiable problems can be retrieved.
Second, by solving the decoupling problem as a CP decomposition, one can use recent widely available and robust numerical tools, such as Tensorlab for MATLAB \cite{tensorlab3} (or alternatives \cite{andersson2000,bader2012tensortoolbox}). 

This paper specifically focuses on the two tensorization methods \cite{VanMulders.etal14conf-Identification} and \cite{Dreesen.etal14-Decoupling}.
Although both associated tensors have a particular structure, both approaches seem quite different in nature, and each of the methods has distinct advantages over the other one. 
For instance, the coefficient-based methods \cite{Schoukens.Rolain12TIM-Cross,Schoukens.etal14conf-Identification,Tiels.Schoukens14conf-coupled,VanMulders.etal14conf-Identification} require several high-order tensors (or their matricizations) for polynomials of high degrees, whereas \cite{Dreesen.etal14-Decoupling} involves a single third-order tensor only. 
Coefficient-based approaches can easily deal with single polynomials, whereas \cite{Dreesen.etal14-Decoupling} would in that case not be able to take advantage of the uniqueness properties of the CP decomposition, as the \emph{tensor} of Jacobian matrices is then a matrix composed of gradient vectors. 
On the other hand, the approach of \cite{Dreesen.etal14-Decoupling} can be applied to non-polynomial functions, which may in some cases be of interest, e.g., in \cite{dreesen2014i2mtc} a neural network was decoupled. 

We aim at obtaining a deeper understanding of the connections between the solution approaches. This is profitable when extending the applicability range of the methods, e.g., when moving from polynomials to any differentiable functions. %of interest in a non-algebraic or non-polynomial context.  
Furthermore, such connections may provide a way to transfer theoretical properties from one formulation to another.  For example, as we argue in Section~\ref{sec:structured}, exploring the previously ignored structure in the tensor decomposition in one of the settings enlarges the range of decomposable functions. This knowledge may lead to improved algorithms in another setting as well.
%We will explore how the different decoupling approaches can be related and we discuss their properties.

The remainder of this article is organized as follows: 
Section~\ref{sec:model} formalizes the problem of decoupling multivariate polynomials.
Section~\ref{sec:waring} explains the link between the decoupling problem and the symmetric tensor decomposition problem.
Section~\ref{sec:tensorizations} discusses the construction of the tensor of unfoldings~\cite{VanMulders.etal14conf-Identification} and the Jacobian tensor~\cite{Dreesen.etal14-Decoupling}.
%Section~\ref{sec:relations} presents the relations between the two alternatives and associated computational aspects. 
%Section~\ref{sec:quasi-Hankel} recalls links between polynomials and tensors.
Section~\ref{sec:relations} presents our first contribution, namely the relation between the two tensorizations.
The second main contribution of the paper is Section~\ref{sec:structured}, which clarifies the need of dealing with structure in the decompositions and proposes a coupled CP decomposition approach for solving the structured problem.
Section~\ref{sec:conclusions} draws the conclusions and points out open problems for future work. 

\subsection*{Notation}
Scalars are denoted by lowercase or uppercase letters. 
Vectors are denoted by lowercase boldface letters, e.g., $\bfu$. 
Elements of a vector are denoted by lowercase letters with an index as subscript, e.g., $\bfx = \left[ \begin{array}{ccc} x_1 & \cdots & x_m \end{array} \right]^\top$. 
Matrices are denoted by uppercase boldface letters, e.g., $\bfV$. 
The entry in the $i$-th row and $j$-th column of a matrix $\bfV$ is denoted by $v_{ij}$, and the matrix $\bfV \in \bbR^{m \times r}$ may be represented by its columns $\bfV = \left[ \begin{array}{ccc} \bfv_1 & \cdots & \bfv_r \end{array} \right]$.
The Kronecker product of matrices is denoted by ``$\otimes$''.

Tensors of order $d$ are denoted by uppercase caligraphical letters, e.g., $\calJ \in \bbR^{n \times m \times N}$.
The outer product is denoted by ``$\circ$'' and is defined as follows: For $\calT = \bfu \circ \bfv \circ \bfw$, the entry in position $(i,j,k)$ is equal to $u_i v_j w_k$.
The canonical polyadic (CP) decomposition expresses a tensor $\calT$ as a (minimal) sum of rank-one tensor terms \cite{carroll1970,harshman1970,kolda2009tdaa} as 
$\calT = \sum_{i=1}^R \bfu_i \circ \bfv_i \circ \bfw_i$,
%see Figure~\ref{fig:structured_CPD},
and is sometimes denoted in a short-hand notation as 
$\calT = \llbracket \bfU, \bfV, \bfW \rrbracket$.
The CP rank $r$ is defined as the (minimal) number of terms that is required to represent $\calT$ as a sum of $r$ rank-one terms. 
To refer to elements of matrices or tensors, or subsets thereof, we may use MATLAB-like index notation (including MATLAB's colon wildcard): for instance,  $\calT_{i,j,k,\ell}$ is the element at position $(i,j,k,\ell)$ of a fourth-order tensor $\calT$, and $\calT_{:,:,2}$ is the second frontal slice of a third-order tensor $\calT$. 
The mode-$n$ product is denoted by ``$\bullet_n$'' and is defined as follows. 
Let $\calX$ be an $I_1 \times I_2 \times \cdots \times I_N$ tensor,
and let $\vec{u}$ be a vector of length $I_n$,
then we have 
$\left( \calX \bullet_n \vec{u}^\top \right)_{i_1 \cdots i_{n-1} i_{n+1} \cdots i_N } = \sum_{i_n=1}^{I_n} x_{i_1 i_2 \cdots i_N} u_{i_n}$.
Notice that the result is a tensor of order $N-1$, as mode $n$ is summed out.
Similarly, for an $I_1 \times I_2 \times \cdots \times I_N$ tensor  $\calX$ and a matrix $\bfM \in J \times I_n$, the mode-$n$ product is defined as $\left( \calX \bullet_n \bfM^\top \right)_{i_1 \cdots i_{n-1} j i_{n+1} \cdots i_N } = \sum_{i_n=1}^{I_n} x_{i_1 i_2 \cdots i_N} m_{j,i_n}$.
Let $\tvec(\calT)$ denote the  column-major vectorization of a tensor $\calT$.
The first-mode unfolding of an $I_1 \times I_2 \times \cdots \times I_N$ tensor $\calX$ is the matrix $\calX_{(1)}$ of size $I_1 \times I_2 \cdots I_N$, where each row is the vectorized slice of the tensor $\calX$, i.e. $(\calX_{(1)})_{i,:} = \left(\tvec(\calX_{i,:,\ldots,:})\right)^{\top}$
(see, for example, \cite{kolda2009tdaa} for more details).

%\begin{figure}[htb!]
%\centering
%\begin{tikzpicture}
%\begin{scope}[scale=0.75]
%%\draw(0.5,0.5) node (A) {\small $\mathcal{T}(\bff)$};
%\draw(0.5,0.5) node (A) {};
%\draw(0,0) rectangle (1,1);
%\draw(0,1) -- (0.5,1.5) -- (1.5,1.5) -- (1.5,0.5) --(1,0);
%\draw(1,1) -- (1.5,1.5);
%
%\draw(2,0.8) node (eq) {$=$};
%\draw(6,0.8) node (eq) {$\quad+\,\cdots\,+\quad$};
%
%\draw(3,-0.2) -- (3,0.8);
%\draw(3.2,1) -- (4.2,1);
%\draw(3.2,1.2) -- (3.7,1.7);
%
%\draw(8,-0.2) -- (8,0.8);
%\draw(8.2,1) -- (9.2,1);
%\draw(8.2,1.2) -- (8.7,1.7);
%\end{scope}
%\end{tikzpicture}
%\caption{The CPD decomposes a third-order tensor into a minimal sum of rank-one terms, where each rank-one term is the outer product of three vectors. 
%The number of terms is called the rank of the tensor. 
%    }
%\label{fig:structured_CPD}
%\end{figure}

\section{The polynomial decoupling model}\label{sec:model}
%\subsection{Problem statement}
First, we describe the model, following the notation of \cite{Dreesen.etal14-Decoupling} as illustrated in Fig.~\ref{fig:decoupled}.
Consider a multivariate polynomial map $\bff: \bbR^{m} \to \bbR^{n}$, i.e., a vector 
\[
\bff(\bfu) = \bmx f_1(\bfu) & \cdots & f_{n}(\bfu) \emx^{\top}
\]
of multivariate polynomials (of total degree at most $d$) in variables $\bfu = \bmx u_1 & \cdots & u_{m} \emx^{\top}$. 
We say that  $\bff$ has a \textit{decoupled representation}, if it can be expressed as
\begin{equation}\label{eq:decoupled}
\bff(\bfu) = \bfW \bfg(\bfV^{\top}\bfu), 
%\bff(\vec{u}) = \bfW \cdot \bfg(\bfV^{\top}\bfu), 
\end{equation}
where $\bfV \in \bbR^{m \times r}, \bfW \in \bbR^{n \times r}$ are transformation matrices, and $\bfg: \bbR^{r} \to \bbR^{r}$ is defined as 
\[
\bfg(x_1, \ldots, x_{r}) = \bmx g_1(x_1) & \cdots& g_{r}(x_{r}) \emx^{\top},
\]
where $g_k: \bbR \to \bbR$  are univariate polynomials of degree at most $d$, i.e.,
\begin{equation}\label{eq:g_k}
g_k(t) = c_1 t + \cdots + c_d t^d.
\end{equation}
Note that we omitted the constant terms of the polynomials, since they are not uniquely identifiable \cite{Dreesen.etal14-Decoupling}. 
In this paper we limit ourselves to the model \eqref{eq:decoupled}.
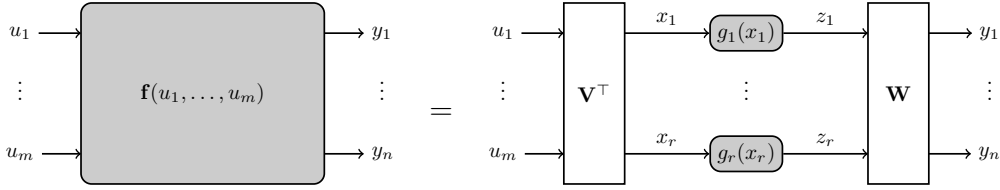
\begin{figure}[htb!]
    \centering
    \tikz \node [scale=0.8]{
        \begin{tikzpicture}
            \node (u1) at (0,3) {$u_1$};
            \node at (0,2.15) {$\vdots$};
            \node (um) at (0,1) {$u_m$};
            \draw [thick,fill=black!20, rounded corners=5pt] (1,0.5) rectangle (5,3.5); \node (F) at (3,2) {$\bff(u_1,\ldots,u_m)$};
            \draw [->, thick, label=] (5,3) -- (5.65,3) node[right] {$y_1$};
            \node at (5.95,2.15) {$\vdots$};
            \draw [->, thick] (5,1) -- (5.65,1) node[right] {$y_n$};
            \draw [->, thick] (u1) -- (1,3);
            \draw [->, thick] (um) -- (1,1);
        \end{tikzpicture}
        \quad
        \raisebox{6\height}{\Large$=$}
        \quad
        \begin{tikzpicture}
            \node (u1) at (0,3) {$u_1$};
            \node at (0,2.15) {$\vdots$};
            \node (um) at (0,1) {$u_m$};
            \draw [thick] (1,0.5) rectangle (2,3.5); \node (L) at (1.5,2) {$\bfV^{\top}$};
            \draw [->, thick] (u1) -- (1,3);
            \draw [->, thick] (um) -- (1,1);
            \node [shape=rectangle,draw,thick,fill=black!20,rounded corners=5pt] (g1) at (4,3) {$g_1(x_1)$};
            \draw [->, thick, label=] (2,3) -- (g1) node[above,midway] {$x_1$};
            \node at (4,2.15) {$\vdots$};
            \node [shape=rectangle,draw,thick,fill=black!20,rounded corners=5pt] (gr) at (4,1) {$g_r(x_r)$};
            \draw [->, thick] (2,1) -- (gr) node[above,midway] {$x_r$};
            \draw [thick] (6,0.5) rectangle (7,3.5); \node (R) at (6.5,2) {$\bfW$};
            \draw [->, thick] (g1) -- (6,3) node[above,midway] {$z_1$};
            \draw [->, thick] (gr) -- (6,1) node[above,midway] {$z_r$};
            \node (y1) at (8,3) {$y_1$};
            \node at (8,2.15) {$\vdots$};
            \node (yn) at (8,1) {$y_n$};
            \draw [->, thick] (7,3) -- (y1);
            \draw [->, thick] (7,1) -- (yn);
        \end{tikzpicture} 
    };
    \caption{Every multivariate polynomial vector function $\bff(\bfu)$ can be represented by a linear transformation of a set of univariate functions (in linear combinations of the original variables).}\label{fig:decoupled}
		 \end{figure}
		 
The decoupled representation \eqref{eq:decoupled} can be also equivalently rewritten as
\begin{equation}\label{eq:decoupling_additive}
\bff(\bfu) = \bfw_1 g_1(\bfv^{\top}_1\bfu) + \cdots + \bfw_r g_r(\bfv^{\top}_r\bfu), 
\end{equation}
where $\bfv_k$ and $\bfw_k$ are the columns of $\bfV$ and $\bfW$, respectively. %, {i.e.}, $\bfV = \bmx \bfv_1 & \cdots & \bfv_r \emx$, and $\bfW = \bmx \bfw_1 & \cdots & \bfw_r \emx$.
As shown in \cite{Comon.etal15conf-polynomial,comon2017xrank}, the decomposition \eqref{eq:decoupling_additive} is a special case of the $X$-rank decomposition \cite[\S 5.2.1]{Landsberg12-Tensors}, where the set of ``rank-one'' terms is the set of polynomial maps of the form $\bfw g(\bfv^{\top}\bfu)$.
The $X$-rank framework is useful \cite{comon2017xrank} for studying the identifiability of the model \eqref{eq:decoupling_additive}.

The following example shows a decoupled representation for a simple case.
This example will be used throughout the paper to illustrate the main ideas of the various aspects that we will explore. 
\begin{example}\label{ex:main_example}
    Consider a function $\bff(\bfu) = \left[ \begin{array}{cc} f_1(u_1,u_2) & f_2(u_1,u_2) \end{array} \right]^\top$ given as 
\[
\begin{split}
    f_1(u_1,u_2) &= - 3 u_1^3 - 9 u_1^2 u_2 - 27 u_1 u_2^2 - 15 u_2^3 - 8 u_1^2 - 8 u_1 u_2 - 20 u_2^2 + 3 u_1 + 9 u_2,\\
    f_2(u_1,u_2) &= - 7 u_1^3 - 6 u_1^2 u_2 + 6 u_1 u_2^2 + 7 u_2^3 + 10 u_1^2 + 16 u_1 u_2 + 10 u_2^2 - 3 u_2. 
\end{split}
\]
It can be verified that $\bff$ has a decomposition \eqref{eq:decoupling_additive} with $m=n=2$ and $r=3$ as  
\[
    \bfV = \left[ \begin{array}{rrr} 2 & -1 & 1 \\ 1 & 1 & 2 \end{array} \right], 
        \quad \mbox{and} \quad 
    \bfW = \left[ \begin{array}{rrr} 0 & 1 & -2 \\ -1 & 0 & 1 \end{array} \right],  
\]
and $g_1(x_1) = x_1^3 -2 x_1^2 - x_1$, $g_2(x_2) = x_2^3 -4 x_2^2 + x_2$, $g_3(x_3)= x_3^3 +2 x_3^2 -2 x_3$ (see Figure~\ref{fig:f1f2}).  
%\end{split}
%\]
{\color{blue}
\begin{figure}[h!]
\includegraphics[width=0.48\textwidth]{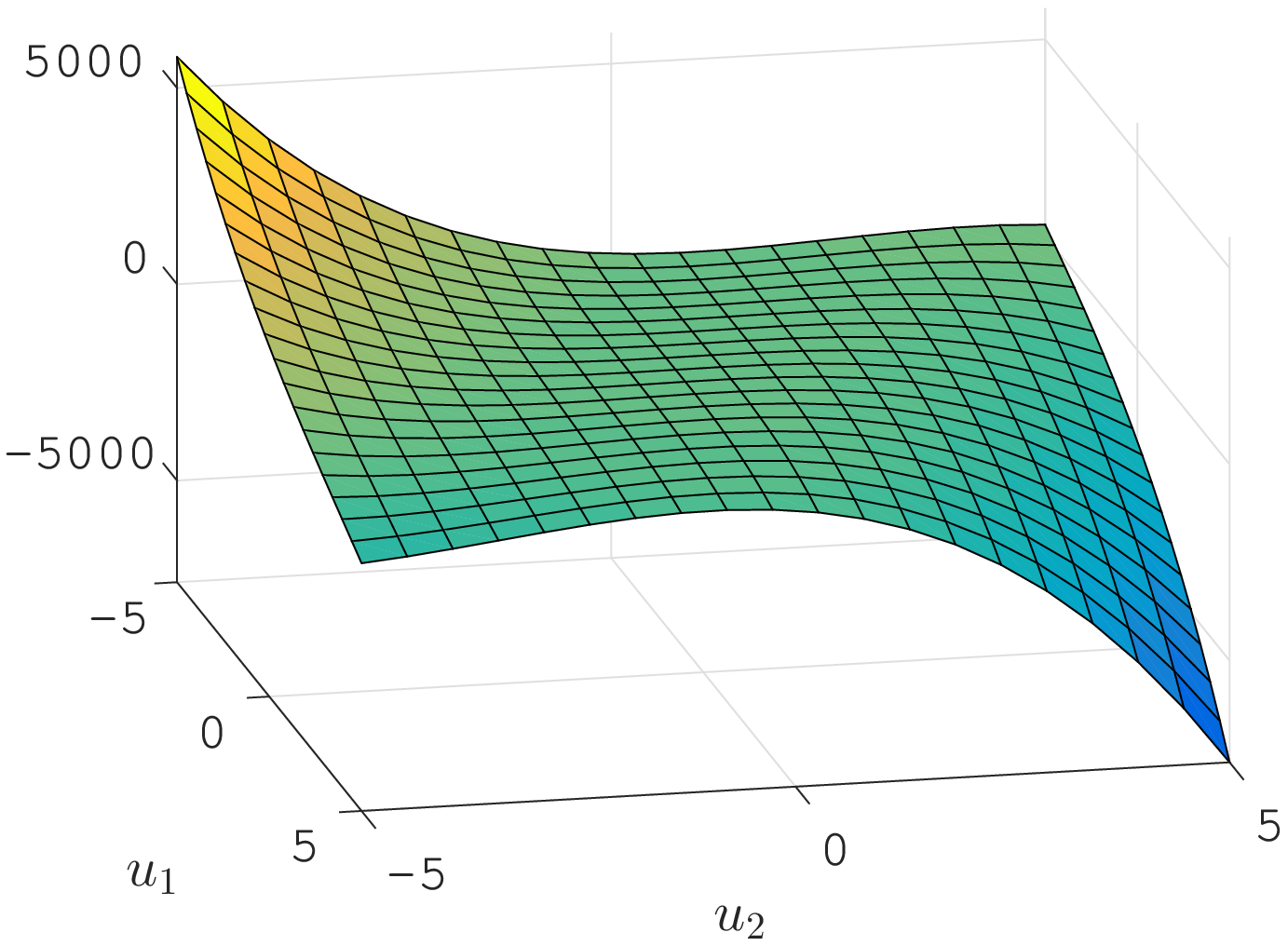}
\quad
\includegraphics[width=0.48\textwidth]{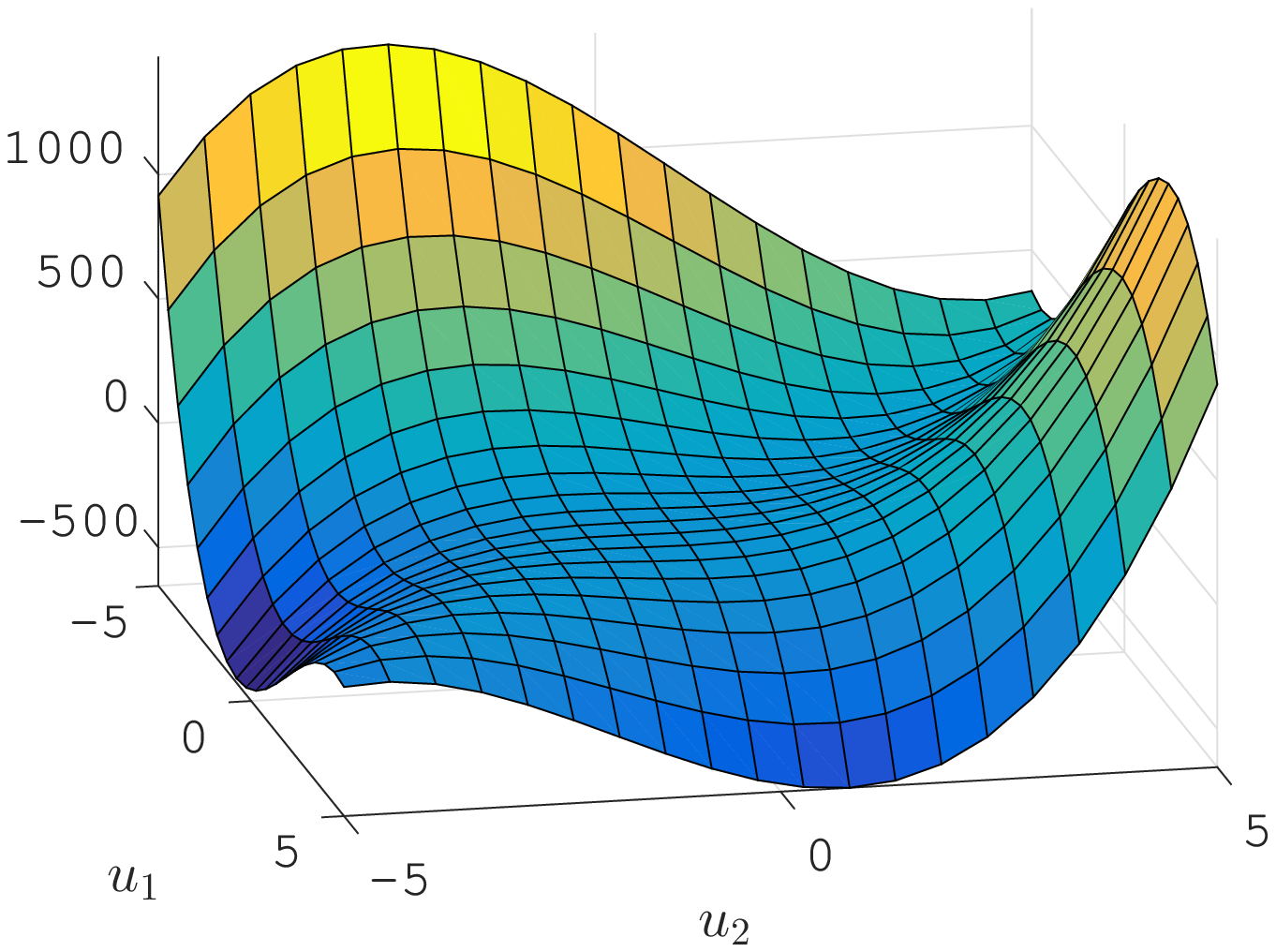}
\\[2em]
\includegraphics[width=0.31\textwidth]{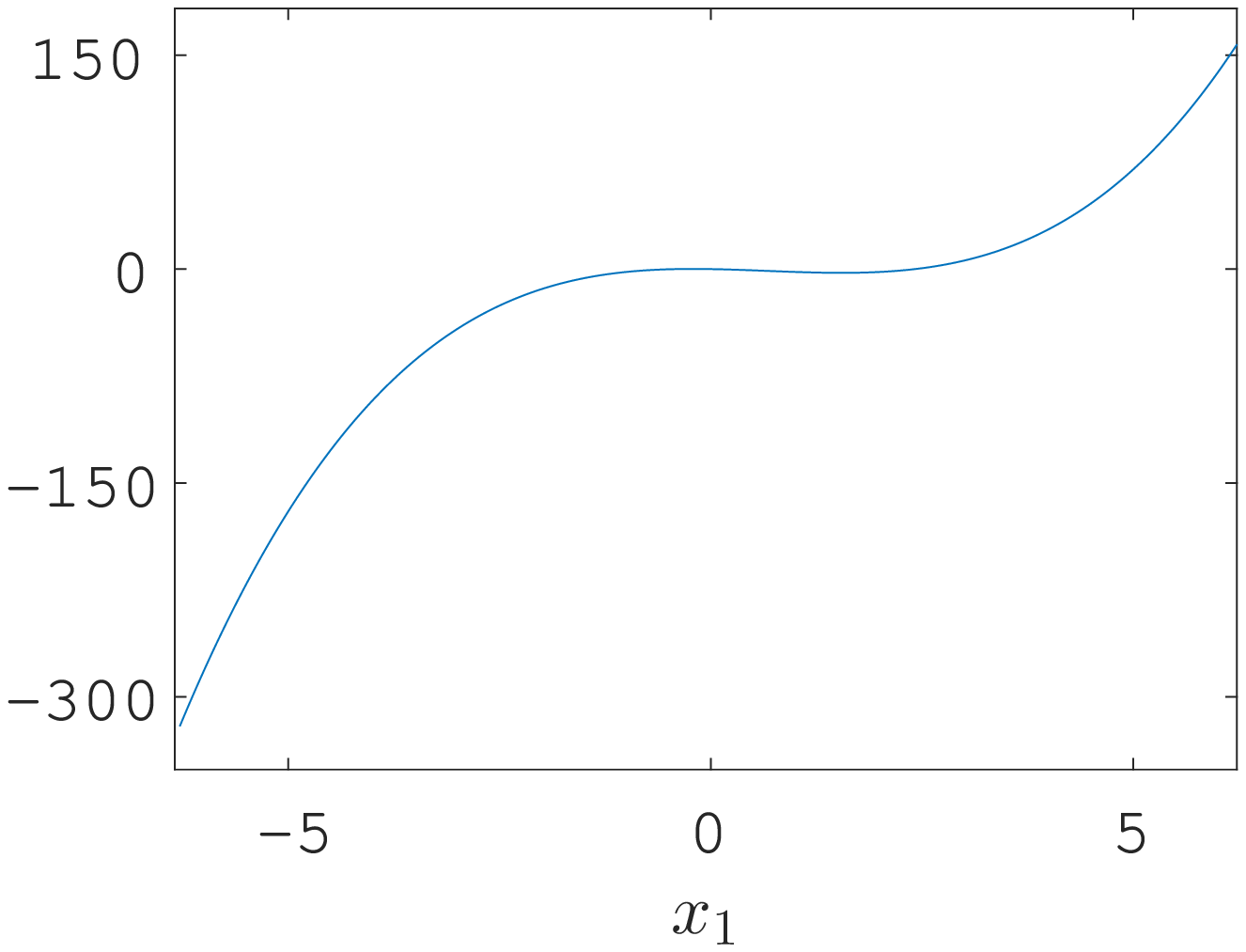}
\quad
\includegraphics[width=0.31\textwidth]{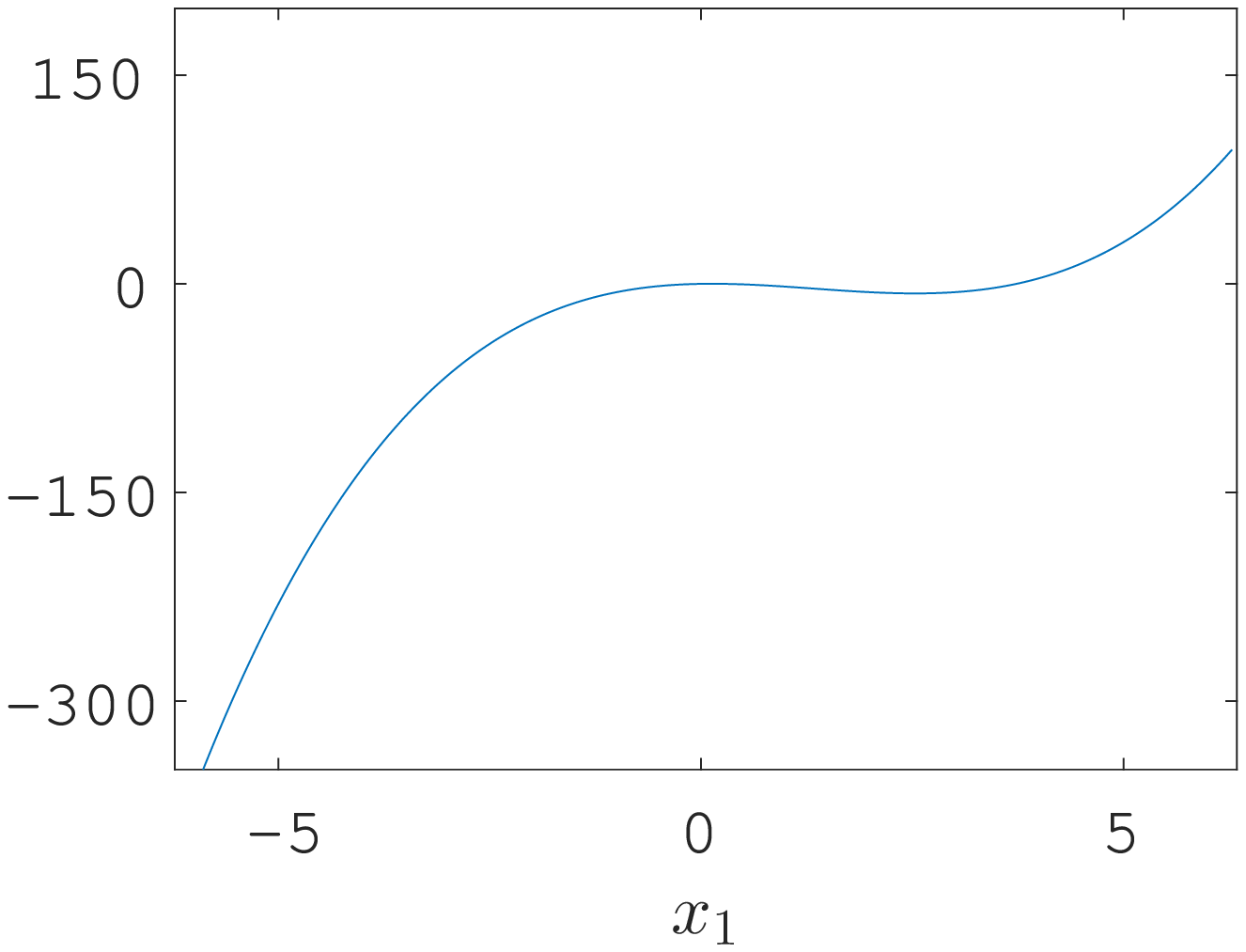}
\quad
\includegraphics[width=0.31\textwidth]{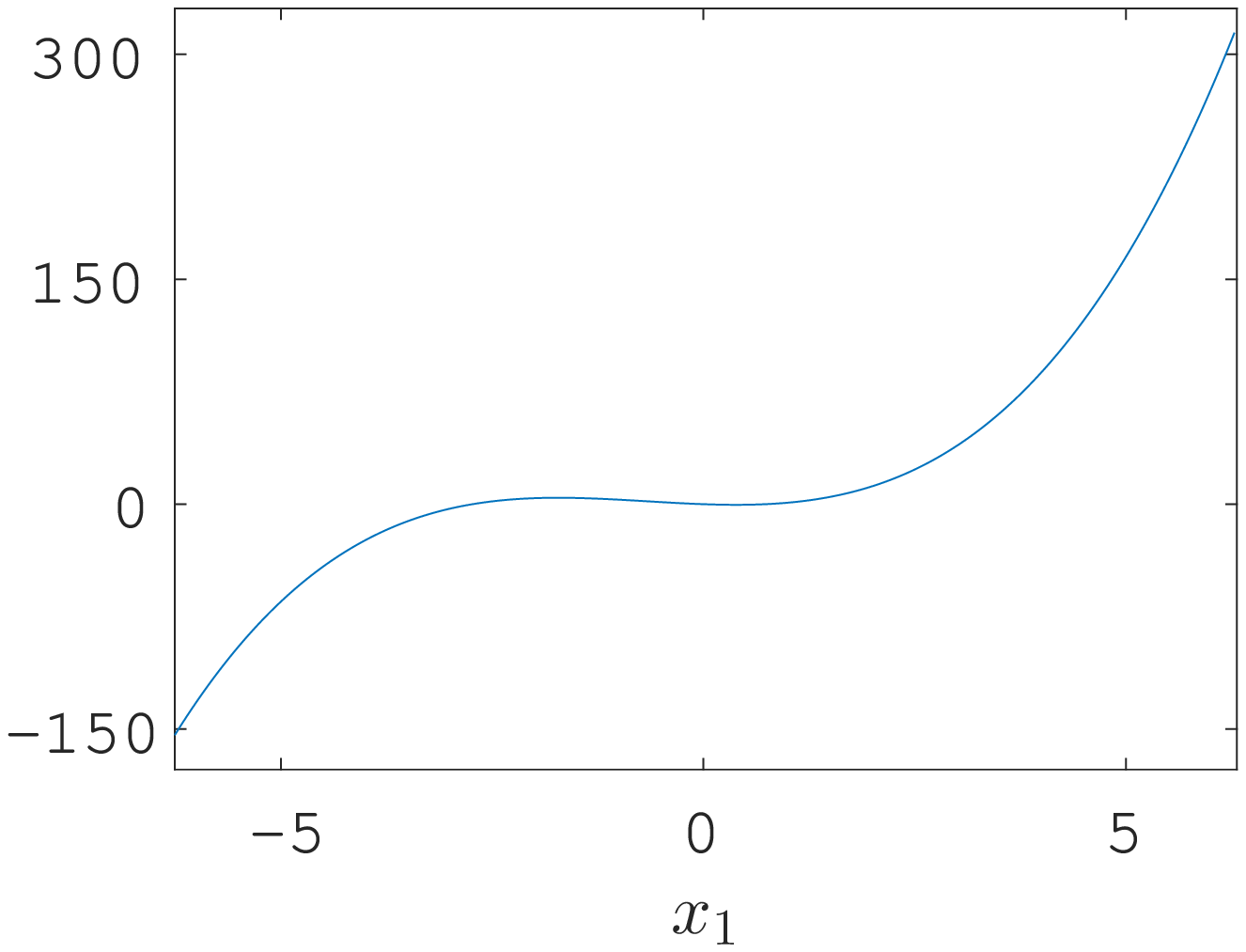}
    \caption{The decoupling problem in Example 1 consists of decomposing the multivariate functions $f_1(u_1,u_2)$ and $f_2(u_1,u_2)$ (top row) to the univariate functions $g_1(x_1)$, $g_2(x_2)$, and $g_3(x_3)$ (bottom row), using suitable transformation matrices as in (\ref{eq:decoupled}).}
\label{fig:f1f2}
\end{figure}
}
\end{example}

{\color{blue}
\begin{remark}
In general, the coupled representation $\bff(\bfu)$ has 
%\[
    $    n \left( {m+d \choose d} -1\right)$
%\]
coefficients, while the decoupled representation $\bfW \bfg (\bfV^\top \bfu)$ has 
%\[
$r(m + n + d)$
%\]
coefficients. 
Due to the combinatorial increase of the number of coefficients in the coupled representation, the decoupled representation is especially beneficial for large values of $m$, $n$, and $d$. 
But even for small values of $m$, $n$, and $d$, the parametric reduction can be significant, for example, if $m=n=3$, $d=5$, and $r=3$, the coupled representation has 168 coefficients, while the decoupled one has only 36 coefficients. 
\end{remark}
}

%\pdlong{\color{red}
%\subsection{Decoupling as (structured) canonical polyadic decomposition}
%The main idea of the different tensorization approaches is to construct from a polynomial a tensor $\calT(\bff)$ such that 
%\begin{center}
%\fbox{the polynomial $\bff$ has a decoupled representation \eqref{eq:decoupled} if and only if $\calT(\bff)$  has CP rank $r$.}
%\end{center}
%The CP rank $r$ is defined as the (minimal) number of terms that is required to represent $\calT$ as a sum of $r$ rank-one terms. 
%
%Probably the most successful tensorization is the one of \cite{Dreesen.etal14-Decoupling}. 
%But other tensorizations were also used: in \cite{VanMulders.etal14conf-Identification},  \cite{Schoukens.etal14conf-Identification}, \cite{Tiels.Schoukens14conf-coupled}, \cite{Schoukens.Rolain12TIM-Cross}.
%
%The first goal of this paper is to describe the connection between the tensorizations of \cite{Dreesen.etal14-Decoupling} and \cite{VanMulders.etal14conf-Identification}.
%
%The second goal is to explore the structure of the tensor. Indeed, both tensors in 
%\cite{Dreesen.etal14-Decoupling} and \cite{VanMulders.etal14conf-Identification} are structured, and it is important to take the structure into account because of the following:
%
%\begin{itemize}
%\item An unstructured CPD may be non-unique, but the structured (see Figure~\ref{fig:structured_CPD} can be still unique).
%\item Approximate decoupling (in presence of noise) can be reformulated as a structure-preserving approximation of the tensor $\calT(\bff)$.
%\end{itemize}
%
%}

\section{Decoupling polynomials and symmetric tensor decompositions}\label{sec:waring}
Let us review some well-known facts that connect polynomials with symmetric tensors  \cite{Comon.etal08SJMAA-Symmetric,batselier2016}, and that connect some special cases of the representation \eqref{eq:decoupled} with symmetric tensor decompositions.

\subsection{Homogeneous polynomials, symmetric tensors and Waring decomposition}
It is well-known that there is a one-to-one correspondence between homogeneous polynomials and symmetric tensors \cite{Comon.etal08SJMAA-Symmetric}.
For instance, the polynomial $-8 u_1^2  -8 u_1 u_2 - 20 u_2^2$ can be written as \begin{equation}\label{eq:quadratic_example}
-8 u_1^2 -8 u_1 u_2 - 20 u_2^2 
= \bfu^\top \HomT{2} \bfu,\quad \mbox{where }\HomT{2} = \left[ \begin{array}{rr} -8 & -4 \\ -4 & -20 \end{array} \right].
\end{equation}
In general, let $p^{(d)}(u_1,\ldots, u_m)$ be a homogeneous polynomial (also called a $d$-ary form) of degree $d$ in $m$ variables. 
Then there is a unique symmetric tensor $\Psi^{(d)}$ of order $d$ and dimension $m$ such that
\begin{equation}\label{eq:HomPolyT}
p(\bfu) = \HomT{d} \mmult{1} \bfu \cdots \mmult{d} \bfu.
\end{equation}
Next, it is easy to see that the decoupling problem for the polynomial \eqref{eq:HomPolyT} 
takes the form
\begin{equation}\label{eq:waring}
    p(u_1,\ldots,u_m) = \sum_{i=1}^r w_i (v_{1i} u_1 + \cdots + v_{mi} u_m)^d, 
\end{equation}  
which is known as the Waring decomposition \cite{iarrobino-kanev1999, landsberg2012tensorsgeometry} of $p(u_1,\ldots,u_m)$.
The Waring decomposition, in its turn, is equivalent to the symmetric CP decomposition of $\HomT{d}$:
\[
\HomT{d} = \sum_{i=1}^r w_i (\bfv_i \circ \cdots \circ \bfv_i).
\] 
\color{black}
The symmetric CP decomposition of $\HomT{d}$ reveals possible values for the unknowns $v_{ij}$ and $w_i$.
\begin{example}\label{ex:quadratic}
Consider the polynomial given in \eqref{eq:quadratic_example}. % second degree part of $f_1(u_1,u_2)$ from Example~\ref{ex:symmtensors}.
Then the corresponding symmetric matrix $\HomT{2}$ admits the decomposition
\begin{equation}
\left[ \begin{array}{rr} -8 & -4 \\ -4 & -20 \end{array} \right]
= 
\left[ \begin{array}{rr} -2 & 2 \\ 2 & 4 \end{array} \right] 
\left[ \begin{array}{rr} -1 & 0 \\ 0 & -1 \end{array} \right] 
\left[ \begin{array}{rr} -2 & 2 \\ 2 & 4 \end{array} \right], 
\end{equation}
    such that $p(u_1,u_2) = \bfu^\top \HomT{2} \bfu$ has the Waring decomposition 
\[ p(u_1,u_2) = - (- 2 u_1 + 2 u_2)^2 - (2 u_1 + 4 u_2)^2. \]
\end{example}
Notice that the symmetric decomposition of $\HomT{2}$ from \Cref{ex:quadratic} is not unique (nor `essentially unique' \cite{kolda2009tdaa}). Indeed, the eigenvalue decomposition
\[
\left[ \begin{array}{rr} -8 & -4 \\ -4 & -20 \end{array} \right]
\approx
\left[\begin{array}{rr}0.2898 & -0.9571 \\ 0.9571 & 0.2898\end{array}\right] 
\left[\begin{array}{rr} -21.2111 & 0 \\ 0 &   -6.7889 \end{array}\right] 
\left[\begin{array}{rr} 0.2898 & 0.9571 \\  -0.9571 &   0.2898 \end{array} \right] 
\]
provides another valid factorization. 
For $d>2$, however, the Waring decomposition \eqref{eq:waring} possesses uniqueness properties even in the case of quite large ranks \cite{Domanov13uniqueness,Chiantini.etal17generic}. 

Along the same lines, it is possible to decouple jointly several homogeneous polynomials. 
Consider the case of $n$ homogeneous polynomials of degree $d$, denoted by
\begin{equation}\label{eq:decoupling_sim_HomT}
    \begin{array}{rcl}
        p_1(u_1,\ldots,u_m) &=& \HomT{d}_1 \mmult{1} \bfu \cdots \mmult{d} \bfu,\\ 
        &\vdots& \\
        p_n(u_1,\ldots,u_m) &=& \HomT{d}_n \mmult{1} \bfu \cdots \mmult{d} \bfu.\\ 
    \end{array}
\end{equation}
Then the decoupling problem \eqref{eq:decoupled} corresponds to the simultaneous Waring decomposition of several forms or, equivalently, the coupled CP decomposition of several symmetric  tensors.
The rank and identifiability properties of simultaneous Waring decompositions were also studied in the literature, see \cite{carlini2003waringseveralforms,Domanov13uniqueness,Abo18Most} and references therein. 

\subsection{The case of non-homogeneous polynomials}
Next, consider the case of non-homogeneous polynomials.
Any non-homogeneous polynomial of degree $d$ can hence be written as
\begin{equation}\label{eq:poly_symt_form}
p(\bfu) = \bfu^\top \HomT{1}  + \bfu^{\top} \HomT{2} \bfu  + 
\HomT{3} \mmult{1} \bfu \mmult{2} \bfu \mmult{3} \bfu + \cdots
+\HomT{d} \mmult{1} \bfu \cdots \mmult{d} \bfu,
\end{equation}
where $\HomT{1} \in \bbR^{m}$ , $\HomT{2} \in \bbR^{m\times m}$ is a symmetric matrix, and each $\HomT{s} \in \bbR^{m\times \cdots \times m}$, $3 \le s \le d$, is a symmetric tensor of order $s$.
\begin{example}\label{ex:symmtensors}
We continue Example~\ref{ex:main_example}. 
We can write $f_1(u_1,u_2)$ and $f_2(u_1,u_2)$ as
\[
f_1(u_1,u_2) 
%&=& 
=
\bfu^\top \left[\begin{array}{r}
3\\9
\end{array}
\right] 
+ 
\bfu^\top
\left[\begin{array}{rr}
    -8 & -4 \\-4 & -20
\end{array} \right]
\bfu
+ 
\HomT{3} 
\mmult{1} \bfu \mmult{2} \bfu \mmult{3} \bfu,
\]
with 
\[
\HomT{3}_{:,:,1} = \left[\begin{array}{rr}
 -3 &  -3 \\
 -3 &  -9  
\end{array}\right],
\quad \quad \quad
\HomT{3}_{:,:,2} = \left[\begin{array}{rr}
 -3  & -9 \\
 -9 &  -15 
\end{array}\right],
\]
and
\[
f_2(u_1,u_2) = 
\bfu^\top \left[\begin{array}{r}
0 \\ -3
\end{array}
\right] 
+ 
\bfu^\top
\left[\begin{array}{rr}
 10 & 8 \\ 8 & 10
\end{array} \right]
\bfu
+ 
\HomT{3} 
\mmult{1} \bfu \mmult{2} \bfu \mmult{3} \bfu,
\]
with 
\[
\HomT{3}_{:,:,1} = \left[ \begin{array}{rr}
 -7 &  -2\\
 -2 &   2 
\end{array}\right],
\quad \quad \quad
\HomT{3}_{:,:,2} = \left[ \begin{array}{rr}
 -2  & 2 \\
 2 &  7 
\end{array}\right].
\]
\end{example}
The decomposition of a single non-homogeneous polynomial as in
\eqref{eq:decoupling_additive} is hence equivalent to joint decomposition of several symmetric tensors but of different orders \cite{Comon.etal15conf-polynomial}.

Finally, several non-homogeneous polynomials can be jointly decomposed in a similar way. 
Consider $n$ non-homogeneous polynomials of maximal degree $d$, denoted as
\begin{equation}
    \begin{array}{rcl}
        p_1(u_1,\ldots,u_m) &=& \bfu^\top \HomT{1}_1 + \cdots + \HomT{d}_1 \mmult{1} \bfu \cdots \mmult{d} \bfu, \\ 
        &\vdots& \\
        p_n(u_1,\ldots,u_m) &=& \bfu^\top \HomT{1}_n + \cdots + \HomT{d}_n \mmult{1} \bfu \cdots \mmult{d} \bfu,  
    \end{array}
\end{equation}
The full decomposition in \eqref{eq:decoupled} can be also viewed as a coupled tensor decomposition, which will be presented in  \Cref{sec:simcoupsymCPD}.

\section{Tensorizations and their decompositions}
\label{sec:tensorizations}
In this section, we recall tensorizations proposed in the literature to find the decomposition \eqref{eq:decoupled} by a CP decomposition of a single tensor constructed from $\bff$, namely the tensorizations of  \cite{VanMulders.etal14conf-Identification} and \cite{Dreesen.etal14-Decoupling}.
We recall basic properties and give short proofs for completeness, although these proofs are already present in \cite{VanMulders.etal14conf-Identification, Dreesen.etal14-Decoupling}.
We also use a slightly different notation to simplify the exposition.

\subsection{Tensor of unfoldings \cite{VanMulders.etal14conf-Identification}}
The above link between polynomials, (partially) symmetric tensors and their CP decompositions gives rise to the tensorization approach of \cite{VanMulders.etal14conf-Identification}, in which a tensor is constructed from the coefficients of the polynomials $f_1(u_1,\ldots,u_m)$ up to $f_n(u_1,\ldots,u_m)$.  
This tensorization offers the advantage that several polynomials can be represented as a single tensor, and the decoupling task can be solved using a single (but structured) CP decomposition. 
In this approach, the tensor (shown in Figure~\ref{fig:tensor_VVU}) is constructed from the coefficients of the polynomial map of degree $d$, as follows:
\begin{itemize}
\item The tensor has size $n \times m \times \delta$, where $\delta = \sum\limits_{k=1}^{d} m^{k-1}$.
\item The tensor is constructed by slices
\[
\TVVU_{i,:,:} := \Psi(f_{i}),
\]
where $\Psi$ is a structured $m \times \delta$ matrix built from the coefficients of $f_{i}(\bfu)$.
\end{itemize}

\begin{figure}[htb!]
\centering
\begin{tikzpicture}
\begin{scope}[scale=0.75]
\draw(-6,-1) rectangle (-4,1);
\draw(-6,1) -- (-4.5,2.5) -- (-2.5,2.5) -- (-2.5,0.5) --(-4,-1);
\draw(-4,1) -- (-2.5,2.5);

\draw(-1.5,0) node (D) {\small $=$};
\draw(-5,0) node (A) {\small $\TVVU$};

\draw(0.5,0) node (D) {\small $\vdots$};

\draw[fill=white](0,-1.25) -- (1.5,0.25) -- (3.5,0.25) -- (2,-1.25) -- (0,-1.25);
\draw[fill=white](0,0.5) -- (1.5,2) -- (3.5,2) -- (2,0.5) -- (0,0.5);
\draw[fill=white](0,1.25) -- (1.5,2.75) -- (3.5,2.75) -- (2,1.25) -- (0,1.25);

\draw(1.25,1.5) node (B) {\small $\Psi(f_1)$};
\draw(1.25,0.75) node (B) {\small $\Psi(f_2)$};
\draw(1.25,-1) node (B) {\small $\Psi(f_n)$};

\draw(-5,-1.25) node (D) {\small $m$};
\draw(-6.25,0) node (D) {\small $n$};
\draw(-5.5,2) node (D) {\small $\delta$};

\end{scope}
\end{tikzpicture}
    \caption{The coefficients of a polynomial map $\bff : \bbR^m \to \bbR^n$ of degree $d$ can be arranged into an $n \times m \times \delta$ tensor $\calQ$, where $\delta = \sum_{k=1}^d m^{k-1}$.}\label{fig:tensor_VVU}
\end{figure}
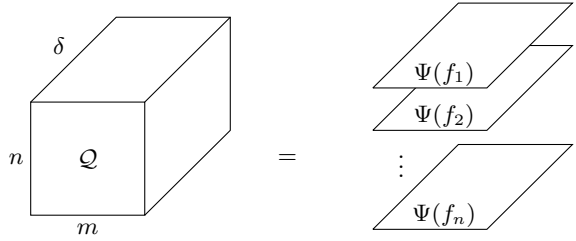
Now let us describe the construction of the structured coefficient matrix $\Psi(p)$ for a given polynomial of degree $d$.
Recall that each such polynomial can be written as in \eqref{eq:poly_symt_form},
where $\HomT{1} \in \bbR^{m}$ , $\HomT{2} \in \bbR^{m\times m}$  is a symmetric matrix and $\HomT{s} \in \bbR^{m\times \cdots \times m}$ are symmetric tensors of order $s$.
Then the matrix $\Psi(p) \in \bbR^{m \times \delta}$ is constructed\footnote{In  \cite{VanMulders.etal14conf-Identification} the linear term is skipped, and $\delta = \sum\limits_{k=2}^{d} m^{d-1}$. In  \cite{VanMulders.etal13A-Identification} the  matrix $\Psi$ is denoted as $\Gamma$.} as
\begin{equation}\label{eq:def_Psi}
\Psi(p) = \left[\begin{array}{c|c|c|c|c} \HomT{1} & \HomT{2} & \HomT{3}_{(1)} & \cdots & \HomT{d}_{(1)} \end{array}\right],
\end{equation}
where $\calG_{(1)}$ denotes the first-mode unfolding of a tensor $\calG$.

\begin{example}\label{ex:main_example_TVVU}
A third-degree polynomial in two variables
\[
p(u_1,u_2) =  a_1 u_1 + a_2 u_2  + b_1 u_1^2 + 2 b_2 u_1u_2 + b3 u_2^2 + d_1 u_1^3 +3d_2 u_1^2 u_2 + 3d_3 u_1 u_2^2 + d_4 u_3^3
 \]
 has the representation
\begin{equation}\label{eq:poly2_example}
p(u_1,u_2) = \bfu^{\top} \bmx a_{1} \\ a_{2}\emx + 
\bfu^{\top} \bmx b_1 & b_2 \\ b_2 & b_3 \emx \bfu + \HomT{3} \mmult{1} \bfu \mmult{2} \bfu \mmult{3} \bfu,
\end{equation}
where 
\[
\HomT{3}_{:,:,1} = \bmx d_1 & d_2 \\ d_2 & d_3 \emx, \quad \HomT{3}_{:,:,2} = \bmx d_2 & d_3 \\ d_3 & d_4 \emx.
\]
By putting all the unfoldings together, we get  
\begin{equation}\label{eq:Psi_p_poly2}
\Psi(p) = 
\left[
\begin{array}{c|cc|cccc}
a_{1} & b_1 & b_2   & d_1  & d_2   & d_2  &  d_3  \\
a_{2} & b_2 & b_3  &  d_2 & d_3 & d_3 & d_4
\end{array}
\right].
\end{equation}
Hence, for  $f_1$ and $f_2$ in Example~\ref{ex:main_example}, the slices of the tensor $\TVVU$ are given by
\[
\TVVU_{1,:,:} =\Psi(f_1)  = \left[\begin{array}{c|cc|cccc}
3 & -8 & -4   &  -3  &  -3 & -3  & -9 \\
9 & -4 & -20  &  -3 &  -9  & -9 &  -15 
\end{array}\right],
\]
and 
\[
\TVVU_{2,:,:} =\Psi(f_2)=\left[\begin{array}{c|cc|cccc}
0 & 10 &  8  &  -7 &  -2 & -2  & 2 \\
-3 & 8 & 10  &  -2 &   2  & 2 &  7 
\end{array}\right].
\]
\end{example}

%\begin{example}\label{ex:poly2_slice_unfolding}
%We  with an example $m=2$ and $d=3$. Take a polynomial
%\begin{equation}\label{eq:poly2_example}
%p(u_1,u_2) = \bfu^{\top} \bmx\psi^{(1)}_{1} \\ \psi^{(1)}_{2}\emx + 
%\bfu^{\top} \bmx\psi^{(2)}_{1,1} & \psi^{(2)}_{1,2} \\ \psi^{(2)}_{1,2} & \psi^{(1)}_{2,2} \emx \bfu + \HomT{3} \mmult{1} \bfu \mmult{2} \bfu \mmult{3} \bfu.
%\end{equation}
%Then the matrix $\Psi(p) \in \bbR^{2\times (1+2+4)}$ has the following form
%\begin{equation}\label{eq:Psi_p_poly2}
%\Psi(p) = 
%\left[
%\begin{array}{c|cc|cccc}
%\psi^{(1)}_{1} & \psi^{(2)}_{1,1} & \psi^{(2)}_{1,2}     &  {\psi^{(3)}}_{1,1,1}  &  {\psi^{(3)}}_{2,1,1}   &  {\psi^{(3)}}_{2,1,1}  &  {\psi^{(3)}}_{2,2,1}  \\
%{\psi^{(1)}}_{2} & \psi^{(2)}_{1,2}   & \psi^{(2)}_{2,2}  &  {\psi^{(3)}}_{2,1,1}  &  {\psi^{(3)}}_{2,2,1}   & {\psi^{(3)}}_{2,2,1} &  {\psi^{(3)}}_{2,2,2} 
%\end{array},
%\right]
%\end{equation}
%where $\psi^{(3)}_{i,j,k}$ is the $(i,j,k)$-th element of the tensor $\HomT{3}$.
%Note that in \eqref{eq:Psi_p_poly2}, some elements are equal due to symmetry.
%\end{example}
As proved in \cite{VanMulders.etal14conf-Identification}, the tensor $\TVVU$ has a CP decomposition, which reveals the decomposition (\ref{eq:decoupled}).
We repeat here a simplified version of the proof for completeness.
\begin{lemma}
\label{lem:VVU_CPD}
\label{lem:TVVU_decoupled_cp}
For the polynomial map \eqref{eq:decoupled}, the tensor $\TVVU$ has the following CP decomposition:
\begin{equation}\label{eq:TVVU_CP}
\TVVU = \sum\limits_{k=1}^{r} \bfw_k \circ \bfv_k \circ \bfz_k,
\end{equation}
where 
\begin{equation}\label{eq:z_k}
\bfz_k = \left[\begin{array}{c|c|c|c|c} c_{k,1} & c_{k,2} \bfv_{k}^{\top} & c_{k,3}(\bfv_{k}\otimes \bfv_{k})^{\top} & \cdots & c_{k,d} (\bfv_{k}\otimes \cdots \otimes \bfv_{k})^{\top} \end{array}\right]^{\top}.
\end{equation}
\end{lemma}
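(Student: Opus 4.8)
The plan is to verify directly that the rank-one expansion on the right-hand side of \eqref{eq:TVVU_CP} reproduces each slice $\TVVU_{i,:,:} = \Psi(f_i)$, by comparing it slice-by-slice against the definition \eqref{eq:def_Psi} of $\Psi$. First I would fix the polynomial map in decoupled form \eqref{eq:decoupling_additive}, so that $f_i(\bfu) = \sum_{k=1}^r w_{ik}\, g_k(\bfv_k^\top \bfu)$. Expanding $g_k(t) = \sum_{s=1}^d c_{k,s} t^s$ and substituting $t = \bfv_k^\top \bfu$, the degree-$s$ homogeneous part of $f_i$ is $\sum_k w_{ik} c_{k,s} (\bfv_k^\top \bfu)^s$. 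The key observation is that the form $(\bfv_k^\top\bfu)^s$ corresponds, in the sense of \eqref{eq:HomPolyT}, to the symmetric rank-one tensor $\bfv_k^{\circ s} = \bfv_k \circ \cdots \circ \bfv_k$ ($s$ factors); hence the symmetric tensor $\HomT{s}$ associated with $f_i$ is $\HomT{s} = \sum_k w_{ik} c_{k,s}\, \bfv_k^{\circ s}$. This is the heart of the argument and it is exactly where the correspondence between Waring terms and rank-one symmetric tensors recalled in Section~\ref{sec:waring} is used.

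Next I would push this through the unfolding. The first-mode unfolding of $\bfv_k^{\circ s}$ is the $m \times m^{s-1}$ matrix $\bfv_k (\bfv_k \otimes \cdots \otimes \bfv_k)^\top$ with $s-1$ Kronecker factors, by the standard identity relating matricization of an outer product to a Kronecker product of the remaining factors. Therefore the degree-$s$ block of $\Psi(f_i)$, namely $(\HomT{s})_{(1)}$, equals $\sum_k w_{ik} c_{k,s}\, \bfv_k (\bfv_k \otimes \cdots \otimes \bfv_k)^\top$ ($s-1$ Kronecker factors), with the degree-$1$ block being simply $\HomT{1} = \sum_k w_{ik} c_{k,1} \bfv_k$ and the degree-$2$ block $\sum_k w_{ik} c_{k,2}\, \bfv_k \bfv_k^\top$. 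Concatenating the blocks horizontally as in \eqref{eq:def_Psi} gives
\[
\Psi(f_i) = \sum_{k=1}^r w_{ik}\, \bfv_k \left[\begin{array}{c|c|c|c} c_{k,1} & c_{k,2}\bfv_k^\top & c_{k,3}(\bfv_k\otimes\bfv_k)^\top & \cdots \end{array}\right] = \sum_{k=1}^r w_{ik}\, \bfv_k \bfz_k^\top,
\]
with $\bfz_k$ exactly as in \eqref{eq:z_k} (the horizontal concatenation distributes over the scalar $w_{ik}$ and factors out the common left vector $\bfv_k$).

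Finally I would reassemble the slices into the tensor. Since $\TVVU_{i,:,:} = \Psi(f_i) = \sum_k w_{ik}\, \bfv_k \circ \bfz_k$ for every $i$, stacking over $i = 1,\ldots,n$ along the first mode and using that the first-mode index contributes the factor $w_{ik} = (\bfw_k)_i$ yields $\TVVU = \sum_{k=1}^r \bfw_k \circ \bfv_k \circ \bfz_k$, which is \eqref{eq:TVVU_CP}. The only mildly delicate point — the one I would be most careful about — is bookkeeping the unfolding conventions: one must check that the column-major vectorization / first-mode unfolding used in the definition of $\Psi$ matches the ordering in which the Kronecker powers $(\bfv_k \otimes \cdots \otimes \bfv_k)^\top$ appear in \eqref{eq:z_k}, so that the identity $(\bfv_k^{\circ s})_{(1)} = \bfv_k (\bfv_k^{\otimes (s-1)})^\top$ holds on the nose. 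Once the conventions are aligned, everything else is a routine linearity-and-concatenation computation, and no genuine obstacle remains. Note that this also shows $r$ terms suffice; that the stated decomposition is a \emph{CP} decomposition in the minimal-rank sense is not claimed here beyond exhibiting the $r$-term form, consistently with how the lemma is phrased.
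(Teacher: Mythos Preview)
Your proof is correct and follows essentially the same approach as the paper: both compute $\Psi(f_i) = \sum_k (\bfw_k)_i\, \bfv_k \bfz_k^\top$ and then stack over $i$ to obtain the CP form. The only difference is presentational: the paper isolates a single branch $q_k(\bfu) := g_k(\bfv_k^\top\bfu)$ and asserts $\Psi(q_k) = \bfv_k\bfz_k^\top$ as an ``easy calculation'' before invoking linearity of $\Psi$, whereas you expand that calculation explicitly via the symmetric-tensor/unfolding identities, summing over $k$ first and then concatenating the degree blocks.
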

\begin{proof}
Consider  $q_k(\bfu) := g_k(\bfv_k^{\top} \bfu)$, where $g_k$ is as in \eqref{eq:g_k}. Easy calculations  show that
\[
\Psi(q_k) =  \bfv_k \bfz^{\top}_k,
\]
see also \cite[eqn. (A.7)]{VanMulders.etal13A-Identification}.
Since, from \eqref{eq:decoupling_additive}  $f_{i}(\bfu) = \sum\limits_{k=1}^r (\bfw_k)_{i} q_k(\bfu)$, we have that
\[
\Psi(f_i) =  \sum\limits_{k=1}^r (\bfw_k)_{i} \bfv_k \bfz^{\top}_k
\]
which implies \eqref{eq:TVVU_CP}.
\end{proof}

\begin{example}
We continue Examples~\ref{ex:main_example},~\ref{ex:main_example_TVVU}. 
The Kronecker products of columns of $\bfV$ are:
\[
(\bfv_1 \otimes \bfv_1)^{\top} = \bmx 4 & 2 & 2 & 1\emx,
(\bfv_2 \otimes \bfv_2)^{\top} = \bmx 1 & -1 & -1 & 1\emx,
(\bfv_3 \otimes \bfv_3)^{\top} = \bmx 1 & 2 & 2 & 4\emx.
\]
Hence, the matrix $\bfZ =\bmx \bfz_1 & \bfz_2 & \bfz_3 \emx$ is given by
\[
\bfZ^{\top} = 
\left[\begin{array}{c|cc|cccc}
-1 & -4 &  -2  &  4 &  2 & 2  & 1 \\
1 & 4 &  -4  &  1 &  -1 & -1  & 1 \\
-2 & 2 &  4  &  1 &  2 & 2  & 4 \\
\end{array}
\right]. \\
\]
\end{example}

\subsection{The tensor of Jacobian matrices of \cite{Dreesen.etal14-Decoupling}}
The tensorization method of \cite{Dreesen.etal14-Decoupling} does not use the coefficients of $\bff(\bfu)$ directly, but %is inspired on a small-signal analysis of the nonlinear function about a set of operating points. 
%The method 
proceeds by collecting the first-order information of $\bff(\bfu)$ (i.e., the partial derivatives) in a set of sampling points. 
The thusly obtained Jacobian matrices are arranged into a third-order tensor, of which the CP decomposition reveals the decomposition~(\ref{eq:decoupled}).

As in \cite{Dreesen.etal14-Decoupling}, we consider the Jacobian of $\bff$:
\begin{equation}\label{eq:jacobian_def}
\bfJ_{\bff} (\bfu) :=
\bmx
\frac{\partial f_1}{\partial u_1} (\bfu) & \cdots & \frac{\partial f_1}{\partial u_m} (\bfu) \\
\vdots & & \vdots \\
\frac{\partial f_n}{\partial u_1} (\bfu) & \cdots & \frac{\partial f_n}{\partial u_m} (\bfu)
\emx.
\end{equation}
Using Lemma~\ref{lem:jacobian_decoupled}, the tensorization is constructed as follows (see Figure~\ref{fig:tensor_DIS}):
\begin{itemize}
\item $N$ points $\bfu^{(1)}, \ldots, \bfu^{(N)} \in \bbR^m$  are chosen (so-called \emph{sampling points}).

\item An $n \times m \times N$  tensor $\TDIS$ is constructed by stacking the Jacobian evaluations at $\bfu^{(k)}$
\[
\TDIS_{:,:,k} := \bfJ_{\bff}(\bfu^{(k)}).
\]
\end{itemize}
%The construction of the tensor is visualized in Figure~\ref{fig:tensor_DIS}

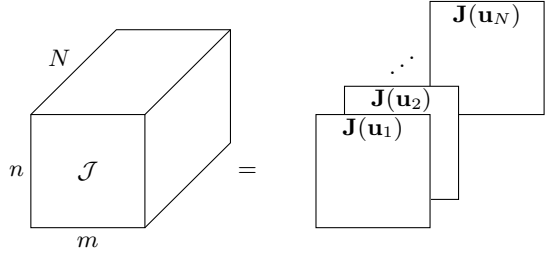
\begin{figure}[htb!]
\centering
\begin{tikzpicture}
\begin{scope}[scale=0.75]
\draw(-6,-1) rectangle (-4,1);
\draw(-6,1) -- (-4.5,2.5) -- (-2.5,2.5) -- (-2.5,0.5) --(-4,-1);
\draw(-4,1) -- (-2.5,2.5);

\draw(-2.2,0) node (D) {\small $=$};
\draw(-5,0) node (A) {\small $\TDIS$};

\draw[fill=white](1,1) rectangle (3,3);
\draw[fill=white](-0.5,-0.5) rectangle (1.5,1.5);
%\draw[fill=white](-0.75,-0.75) rectangle (1.25,1.25);
\draw[fill=white](-1,-1) rectangle (1,1);

\draw(0,0.75) node (A) {\small $\bfJ(\bfu_1)$};
\draw(0.5,1.25) node (B) {\small $\bfJ(\bfu_2)$};
\draw(2,2.75) node (C) {\small $\bfJ(\bfu_N)$};
\draw(0.5,2) node (D) {\small $\iddots$};

\draw(-5,-1.25) node (D) {\small $m$};
\draw(-6.25,0) node (D) {\small $n$};
\draw(-5.5,2) node (D) {\small $N$};

\end{scope}

\end{tikzpicture}
    \caption{The third-order tensor $\TDIS$ is constructed by stacking behind each other a set of Jacobian matrices $\bfJ$ evaluated at the sampling points $\bfu^{(k)}$. Its CP decomposition is equivalent to joint matrix diagonalization of the Jacobian matrix slices.}\label{fig:tensor_DIS}
\end{figure}

\begin{example}\label{ex:main_example_TDIS}
We continue Example~\ref{ex:main_example}. 
%The partial derivatives of $\bff(\bfu)$ are
%\begin{align*}
%\frac{\partial f_1}{\partial u_1} &= -9u_1^2-18u_1u_2-27u_2^2-16u_1-8u_2+3,\\
%\frac{\partial f_1}{\partial u_2} &= -9u_1^2-54u_1u_2-45u_2^2-8u_1-40u_2+9,\\
%\frac{\partial f_2}{\partial u_1} &= -21u_1^2-12u_1u_2+6u_2^2+20u_1+16u_2,\\
%\frac{\partial f_2}{\partial u_2} &= -6u_1^2+12u_1u_2+21u_2^2+16u_1+20u_2-3.\\
%\end{align*}
As a set of sampling points, we choose
\[
\bfu = \bmx0\\ 0\emx, \bfu^{(2)} = \bmx1\\ 0\emx, \bfu^{(3)} = \bmx0\\ 1\emx.  
\]
By evaluating $\bfJ_{\bff} (\bfu) $ at these points, we get the tensor $\TDIS$ given by
\begin{equation}\label{eq:TDIS_ex}
\TDIS_{:,:,1} = 
\bmx
3 & 9\\
0& -3
\emx,\quad 
\TDIS_{:,:,2} = 
\bmx
-22 & -8\\
-1& 7
\emx,\quad
\TDIS_{:,:,3} = 
\bmx
-32 & -76\\
22& 38
\emx.
\end{equation}
\end{example}%
If  $\bff(\bfu)$ has a decoupled representation \eqref{eq:decoupled},  the following lemma holds true.
\begin{lemma}[{\cite[Lemma 2.1]{Dreesen.etal14-Decoupling}}]\label{lem:jacobian_decoupled}
The first order derivatives of \eqref{eq:decoupled} are given by
\begin{equation}\label{eq:jacobian_decoupled}
\bfJ_{\bff} (\bfu)  = \bfW \diag(g'_1(\bfv_1^\top \bfu), \ldots, g'_r(\bfv_r^\top \bfu)) \bfV^{\top},
\end{equation}
where $g_i'(t) := \frac{dg_i}{dt}(t)$.
\end{lemma}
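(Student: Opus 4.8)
The plan is to prove the formula \eqref{eq:jacobian_decoupled} by a direct computation using the chain rule, exploiting the additive form \eqref{eq:decoupling_additive} of the decoupled representation. First I would write $\bff(\bfu) = \sum_{k=1}^r \bfw_k\, g_k(\bfv_k^\top \bfu)$ and differentiate each scalar component $f_i(\bfu) = \sum_{k=1}^r (\bfw_k)_i\, g_k(\bfv_k^\top \bfu)$ with respect to $u_j$. Since $\frac{\partial}{\partial u_j}(\bfv_k^\top \bfu) = (\bfv_k)_j = v_{jk}$, the chain rule gives $\frac{\partial f_i}{\partial u_j}(\bfu) = \sum_{k=1}^r (\bfw_k)_i\, g_k'(\bfv_k^\top \bfu)\, v_{jk}$.

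Next I would recognize the right-hand side as a matrix product. The $(i,j)$ entry $\sum_{k=1}^r w_{ik}\, g_k'(\bfv_k^\top \bfu)\, v_{jk}$ is exactly the $(i,j)$ entry of $\bfW\,\diag\!\big(g_1'(\bfv_1^\top\bfu),\ldots,g_r'(\bfv_r^\top\bfu)\big)\,\bfV^\top$, since inserting the diagonal matrix $\bfD(\bfu) := \diag(g_1'(\bfv_1^\top\bfu),\ldots,g_r'(\bfv_r^\top\bfu))$ between $\bfW$ and $\bfV^\top$ scales the $k$-th column of $\bfW$ (equivalently the $k$-th row of $\bfV^\top$) by $g_k'(\bfv_k^\top\bfu)$. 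Assembling over all $i \in \{1,\ldots,n\}$ and $j \in \{1,\ldots,m\}$ and comparing with the definition \eqref{eq:jacobian_def} of $\bfJ_{\bff}(\bfu)$ yields \eqref{eq:jacobian_decoupled}.

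There is essentially no obstacle here: the statement is an immediate consequence of the chain rule once the additive representation is in hand, and the only mild care needed is bookkeeping of indices to see that the diagonal matrix lands in the right place. One could alternatively phrase it more slickly at the level of maps: the Jacobian of $\bfu \mapsto \bfg(\bfV^\top\bfu)$ is, by the chain rule, $\bfJ_{\bfg}(\bfV^\top\bfu)\,\bfV^\top$, and $\bfJ_{\bfg}$ is diagonal because $\bfg$ acts coordinatewise with $(\bfg(\bfx))_k = g_k(x_k)$ depending only on $x_k$; then left-multiplying by the constant matrix $\bfW$ gives $\bfJ_{\bff}(\bfu) = \bfW\,\bfJ_{\bfg}(\bfV^\top\bfu)\,\bfV^\top$, which is \eqref{eq:jacobian_decoupled}. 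Either presentation is short; I would include the componentwise version for transparency and note the map-level version as a one-line remark.
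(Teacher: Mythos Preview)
Your proof is correct and follows the same chain-rule argument as the paper, which simply states $\bfJ_{\bff}(\bfu) = \bfW\,\bfJ_{\bfg}(\bfV^{\top}\bfu)\,\bfV^{\top}$ and notes that $\bfJ_{\bfg}$ is diagonal. Your componentwise derivation just unpacks this one-line computation explicitly.
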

The proof, given in \cite{Dreesen.etal14-Decoupling}, follows by chain rule:
\[
\bfJ_{\bff} (\bfu) = \bfW \bfJ_{\bfg}(\bfV^{\top}\bfu) \bfV^{\top}. 
\]

By Lemma~\ref{lem:jacobian_decoupled},  the evaluations of the Jacobians can be jointly factorized:
\begin{equation}
\begin{split}
\bfJ(\bfu^{(1)}) &= \bfW \bfD^{(1)} \bfV^{\top}, \\
& \; \vdots \\
\bfJ(\bfu^{(N)}) &= \bfW \bfD^{(N)} \bfV^{\top},
\end{split}
\end{equation}
where $\bfD^{(k)} =  \diag(g'_1(\bfv_1^\top \bfu^{(k)}),\ldots,g'_r(\bfv_r^\top \bfu^{(k)}))$.
Therefore, $\TDIS$ admits a CP decomposition
\begin{equation}\label{eq:TDIS_CP}
\TDIS = \llbracket \bfW, \bfV, \bfH \rrbracket = \sum\limits_{k=1}^{r} \bfw_k \circ \bfv_k \circ \bfh_k,
\end{equation}
where  $\bfw_k$, $\bfv_k$ are as in \eqref{eq:decoupling_additive}, and $\bfh_{k}$ contains the evaluations of $g'_k(\bfv_k^\top \bfu)$ in $\bfu^{(1)},\ldots, \bfu^{(N)}$:
\begin{equation}\label{eq:h_k}
\bfh_k = \bmx g'_k(\bfv_k^\top \bfu^{(1)}) &\cdots & g'_k(\bfv_k^\top \bfu^{(N)})\emx^{\top}.
\end{equation}
%{The third factor matrix $\bfH = \bmx \bfh_1 & \cdots & \bfh_r \emx$ has a structure that will be revealed in the next subsections.}

\begin{example}
\label{ex:Hmatrix}
We continue Examples~\ref{ex:main_example} and~\ref{ex:main_example_TDIS}.
By differentiation, we get 
\[
\begin{split}
g'_1(t)= 3t^2 -4 t - 1, \quad
g'_2(t) = 3t^2 -8 t + 1, \quad
g'_3(t) = 3t^2 +4 t -2,
\end{split}
\]
and hence, by substitution,
\begin{equation}\label{eq:bfH_ex1}
\bfH = 
\left[ \begin{array}{rrr}
-1 & 1 & -2 \\
3  & 12 & 5\\
-2 & -4 & 18
\end{array} \right].
\end{equation}
Straightforward calculations show indeed that $\TDIS$ given in \eqref{eq:TDIS_ex} admits a decomposition \eqref{eq:TDIS_CP} with $\bfH$ as in \eqref{eq:bfH_ex1}.
\end{example}

\section{Relation between tensorizations $\TDIS$ and $\TVVU$}
\label{sec:relations}
In this section, we show how CP decompositions of (\ref{eq:TVVU_CP}) and (\ref{eq:TDIS_CP}) are related. 
Moreover, we establish the relation between the ranks of the tensors and uniqueness of CP decompositions.

{\color{blue}
First, we show the relation between the vectors $\bfz_k$ and $\bfh_k$, defined in \eqref{eq:z_k} and \eqref{eq:h_k}, respectively.
}
We give the proof of this basic fact for completeness.
\begin{lemma}\label{lem:zk_and_hk}
{\color{blue}
The vectors $\bfz_k$ and $\bfh_k$ defined in \eqref{eq:z_k} and \eqref{eq:h_k}, respectively, satisfy 
}
\begin{equation}\label{eq:TDIS_TVVU_CP_EQ}
\bfh_k = \VDM^{\top} \bfz_k,
\end{equation}
where $\VDM\in \bbR^{\delta \times N}$ is a Vandermonde-like matrix whose columns are 
\begin{equation}\label{eq:VDM}
\VDM_{:,j} = 
\left[\begin{array}{c|c|c|c|c} 1 & 2 (\bfu^{(j)})^{\top} & 3 (\bfu^{(j)}\otimes \bfu^{(j)})^{\top} & \cdots & d (\bfu^{(j)}\otimes \cdots \otimes \bfu^{(j)})^{\top} \end{array}\right]^{\top}.
\end{equation}
\end{lemma}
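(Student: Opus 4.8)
The plan is to prove \eqref{eq:TDIS_TVVU_CP_EQ} entry by entry: the $j$-th entry of $\VDM^{\top}\bfz_k$ is $\VDM_{:,j}^{\top}\bfz_k$, and I will show it equals $g'_k(\bfv_k^{\top}\bfu^{(j)})$, which by \eqref{eq:h_k} is the $j$-th entry of $\bfh_k$.

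First I would split both $\VDM_{:,j}$ from \eqref{eq:VDM} and $\bfz_k$ from \eqref{eq:z_k} into their $d$ natural blocks indexed by $s=1,\dots,d$, of sizes $1,m,m^2,\dots,m^{d-1}$. Then $\VDM_{:,j}^{\top}\bfz_k$ decomposes as $\sum_{s=1}^{d}$ (block $s$ of $\VDM_{:,j}$)$\,\cdot\,$(block $s$ of $\bfz_k$), where the $s$-th block of $\VDM_{:,j}$ is $s\,(\bfu^{(j)}\otimes\cdots\otimes\bfu^{(j)})$ with $s-1$ factors, and the $s$-th block of $\bfz_k$ is $c_{k,s}\,(\bfv_k\otimes\cdots\otimes\bfv_k)$ with $s-1$ factors (the $s=1$ blocks being the scalars $1$ and $c_{k,1}$).

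The key step is the multiplicativity of the inner product under Kronecker powers, $(\bfa\otimes\cdots\otimes\bfa)^{\top}(\bfb\otimes\cdots\otimes\bfb)=(\bfa^{\top}\bfb)^{p}$ for $p$ factors, which turns the $s$-th summand into $s\,c_{k,s}\,(\bfv_k^{\top}\bfu^{(j)})^{s-1}$ (with the convention that the empty Kronecker power is the scalar $1$, this also covers $s=1$). Summing over $s$ gives
\[
\VDM_{:,j}^{\top}\bfz_k=\sum_{s=1}^{d}s\,c_{k,s}\,(\bfv_k^{\top}\bfu^{(j)})^{s-1},
\]
which is precisely $g'_k$ evaluated at $\bfv_k^{\top}\bfu^{(j)}$, since differentiating \eqref{eq:g_k} yields $g'_k(t)=\sum_{s=1}^{d}s\,c_{k,s}\,t^{s-1}$. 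This proves \eqref{eq:TDIS_TVVU_CP_EQ}.

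I do not expect a genuine obstacle; the only care needed is the bookkeeping of the block structure (the degree-$s$ block carries an $(s-1)$-fold Kronecker power and a leading factor $s$ inherited from differentiation) and a clean invocation of the Kronecker-power identity. As an alternative, one could avoid the componentwise computation by combining the identity $\Psi(q_k)=\bfv_k\bfz_k^{\top}$ from the proof of Lemma~\ref{lem:VVU_CPD} with the general fact that $\nabla p(\bfu^{(j)})=\Psi(p)\VDM_{:,j}$ for every polynomial $p$ of degree at most $d$ (which follows from the mode-$1$ unfolding identity $\HomT{s}\mmult{2}\bfu\cdots\mmult{s}\bfu=\HomT{s}_{(1)}(\bfu\otimes\cdots\otimes\bfu)$), applied to $p=q_k$ and compared with $\nabla q_k(\bfu^{(j)})=g'_k(\bfv_k^{\top}\bfu^{(j)})\,\bfv_k$; this would also expose the stronger relation $\TDIS=\TVVU\bullet_3\VDM^{\top}$ between the full tensors.
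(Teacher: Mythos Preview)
Your proposal is correct and follows essentially the same route as the paper: compute $\VDM_{:,j}^{\top}\bfz_k$ blockwise, apply the Kronecker-power inner-product identity $(\bfa^{\otimes p})^{\top}(\bfb^{\otimes p})=(\bfa^{\top}\bfb)^{p}$, and recognize the resulting sum $\sum_{s=1}^{d}s\,c_{k,s}\,(\bfv_k^{\top}\bfu^{(j)})^{s-1}$ as $g'_k(\bfv_k^{\top}\bfu^{(j)})$. Your bookkeeping of the factor~$s$ coming from differentiation is in fact cleaner than the paper's first pass, and your alternative via $\nabla p(\bfu^{(j)})=\Psi(p)\VDM_{:,j}$ is a nice bonus that anticipates the relation $\TDIS=\TVVU\bullet_3\VDM^{\top}$ proved immediately afterward.
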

{
\begin{proof}
Recall that by the properties of the Kronecker product
\[
(\underbrace{\bfu \otimes \cdots \otimes \bfu}_{d \text{ times }})^{\top} (\bfv \otimes \cdots \otimes \bfv) = (\bfu^{\top} \bfv)^d.
\]
Then from \eqref{eq:z_k} have that 
\[
(\VDM^{\top} \bfz_k)_j = \VDM_{:,j}^{\top}  \bfz_k = 
c_{k,1} + c_{k,2} (\bfv_{k}^{\top}\bfu^{(j)}) + \cdots + c_{k,d} (\bfv_{k}^{\top}\bfu^{(j)})^{d-1} = (\bfh_k)_j,
\]
where the last equality follows from \eqref{eq:h_k} and the fact that
\[
g'_k(t) = c_{k,1} + c_{k,2} t + \cdots + c_{k,d} t^{d-1}.
\]
\end{proof}}

\begin{example}
In Example~\ref{ex:main_example_TDIS}, the matrix $\VDM$ can be found as
\[
\VDM^{\top} =  
\left[\begin{array}{c|cc|cccc}
1 & 0 & 0  &  0  &  0 & 0 & 0 \\
1 & 2 & 0  &  3 &  0  &  0  & 0 \\
1 & 0 & 2  & 0 &  0 &  0 & 3 \\
\end{array}
\right]. 
\]
It is easy to see that $\bfH = \VDM^{\top} \bfZ$.
\end{example}

As a consequence, we get that the two tensors and their ranks are also related.
\begin{theorem}\label{prop:TDIS_TVVU_EQ}
\begin{enumerate}
\item For any polynomial map $\bff$, $\TDIS$ and $\TVVU$ are related as  
\begin{equation}\label{eq:TDIS_TVVU_EQ}
\TDIS=  \TVVU \bullet_3 \VDM^{\top}.
\end{equation}

\item The rank of $\VDM$  is bounded as
\[
\rrank \VDM \le M := \binom{m+d-1}{d-1}.
\]
In addition, if $M \le N$, and $M$ points in $\{\bfu^{(j)}\}$ are  in general position, then $\rrank \VDM = M$.
For example, if points $\{\bfu^{(j)}\}$ are independent and sampled from a continuous probability distribution, then  $\rrank \VDM = M$ with probability $1$.

\item If $\VDM$ has maximal possible rank (i.e. $\rrank \VDM = M$), then
\[
\rrank \TDIS = \rrank \TVVU,
\]
and all the minimal CP decompositions differ only by the third factors, which are linked as in \eqref{eq:TDIS_TVVU_CP_EQ}.
Moreover, if the CP decomposition of $\TVVU$ is unique, then the CP decomposition of $\TDIS$ is also unique.
\end{enumerate}
\end{theorem}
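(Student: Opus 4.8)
The plan is to establish the three parts in order, each building on the previous, with Lemma~\ref{lem:zk_and_hk} doing most of the heavy lifting. For part (1), I would start from the CP decomposition of $\TVVU$ in \eqref{eq:TVVU_CP}, namely $\TVVU = \sum_{k=1}^r \bfw_k \circ \bfv_k \circ \bfz_k$, and apply the mode-$3$ product by $\VDM^\top$. By linearity of $\bullet_3$ and the identity $(\bfw_k \circ \bfv_k \circ \bfz_k) \bullet_3 \VDM^\top = \bfw_k \circ \bfv_k \circ (\VDM^\top \bfz_k)$, this yields $\sum_k \bfw_k \circ \bfv_k \circ (\VDM^\top \bfz_k) = \sum_k \bfw_k \circ \bfv_k \circ \bfh_k$ by Lemma~\ref{lem:zk_and_hk}, which equals $\TDIS$ by \eqref{eq:TDIS_CP}. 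One subtlety: \eqref{eq:TDIS_TVVU_EQ} is claimed for \emph{any} polynomial map $\bff$, not just those with a decoupled representation. To handle this cleanly I would either invoke the trivial decoupled representation (every polynomial map of degree $d$ admits \emph{some} decomposition \eqref{eq:decoupled}, e.g.\ with a large $r$), or simply verify the identity slice-by-slice: the $k$-th frontal slice of $\TVVU \bullet_3 \VDM^\top$ is $\sum_j (\VDM)_{j,k}\, \TVVU_{:,:,j}$, and one checks directly from the definition of $\Psi(p)$ in \eqref{eq:def_Psi} and of $\VDM$ in \eqref{eq:VDM} that this equals $\bfJ_{\bff}(\bfu^{(k)})$, using that $\HomT{s}_{(1)}$ contracted against $s(\bfu^{(k)}\otimes\cdots\otimes\bfu^{(k)})$ in the appropriate way reproduces the gradient of the degree-$s$ homogeneous part (this is exactly the content of Lemma~\ref{lem:zk_and_hk} specialized, plus linearity over the rows indexed by $i=1,\dots,n$).

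For part (2), the matrix $\VDM$ has rows indexed by monomials appearing in the expansions $(\bfu\otimes\cdots\otimes\bfu)$ for degrees $1$ through $d$, but with repetitions: the distinct monomials of degree $s-1$ number $\binom{m+s-2}{s-1}$, and the column $\VDM_{:,j}$ only depends on $\bfu^{(j)}$ through these monomial values (up to the constant multipliers $s$). Hence $\VDM = \bfP \tilde{\VDM}$ where $\tilde{\VDM}$ has only $\sum_{s=1}^d \binom{m+s-2}{s-1} = \binom{m+d-1}{d-1} = M$ distinct rows and $\bfP$ is a $0/1$ selection-type matrix (incorporating the scalars), so $\rrank\VDM \le M$. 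For the genericity claim, $\tilde{\VDM}$ is (a scaled version of) a Vandermonde-type matrix in the monomials $1, u_1,\dots,u_m, u_1^2, \dots$ evaluated at the $N$ sample points; its $M\times M$ minors are nonzero polynomials in the coordinates of the points (nonvanishing because the monomials $\{u^\alpha : |\alpha|\le d-1\}$ are linearly independent functions, so some choice of $M$ points makes the corresponding minor nonzero), hence nonzero on a dense open set, and with probability one under any continuous distribution. I would phrase "general position" as: the points lie outside the (measure-zero, Zariski-closed) locus where all $M\times M$ minors of $\tilde{\VDM}$ vanish.

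For part (3), assume $\rrank\VDM = M$. First, $\rrank\TDIS \le \rrank\TVVU$ is immediate from \eqref{eq:TDIS_TVVU_EQ} since multiplying one factor matrix on the right by a fixed matrix cannot increase CP rank. For the reverse inequality and the structural statement, the key point is that $\VDM^\top \in \bbR^{N\times\delta}$ has a left inverse on the relevant subspace: all the vectors $\bfz_k$ defined in \eqref{eq:z_k} lie in the $M$-dimensional column space $\calR$ spanned by the "collapsed" monomial directions (because $(\bfv_k\otimes\cdots\otimes\bfv_k)^\top$ has the same repetition structure, being a symmetric tensor flattened), and $\VDM^\top$ restricted to $\calR$ is injective since $\rrank\VDM = M = \dim\calR$. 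Hence there is a matrix $\bfL$ with $\bfL \VDM^\top z = z$ for all $z \in \calR$, giving $\TVVU = \TDIS \bullet_3 \bfL$ whenever $\TVVU$'s third factors lie in $\calR$ — which holds for any minimal (indeed any "monomial-structured") CP decomposition, and I would argue it holds for \emph{every} minimal decomposition of $\TVVU$ because the third-mode factors of any CP decomposition lie in the column space of the third unfolding $(\TVVU)_{(3)}$, which is contained in $\calR$. This shows $\rrank\TVVU \le \rrank\TDIS$, hence equality, and sets up a rank-preserving bijection $\{\bfw_k,\bfv_k,\bfz_k\} \leftrightarrow \{\bfw_k,\bfv_k,\VDM^\top\bfz_k\}$ between minimal decompositions that fixes the first two factors and transforms the third via \eqref{eq:TDIS_TVVU_CP_EQ}; uniqueness of one then transfers to the other since the map $z\mapsto \VDM^\top z$ is injective on $\calR$. \textbf{The main obstacle} is the claim that the third-mode factors of an \emph{arbitrary} minimal CP decomposition of $\TVVU$ lie in $\calR$ (not just the "nice" decomposition of Lemma~\ref{lem:TVVU_decoupled_cp}); the clean way around it is the unfolding-column-space argument — $\mathrm{colspan}\big((\TVVU)_{(3)}\big) \subseteq \calR$ because every slice $\Psi(f_i)$ is built from symmetric-tensor unfoldings whose vectorizations already satisfy the monomial-repetition relations — so no minimal decomposition can escape $\calR$, and the correspondence is genuinely a bijection.
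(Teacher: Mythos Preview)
Your proposal is correct and follows essentially the same route as the paper: part~(1) via the existence of some decoupled representation together with Lemma~\ref{lem:zk_and_hk}, part~(2) via the symmetric-subspace $\scrA$ of dimension $M$, and part~(3) via the observation that the tubes $\TVVU_{i,j,:}$ all lie in $\scrA$. The only notable difference is that the paper sidesteps your ``main obstacle'' by using the concrete pseudoinverse: once the tubes of $\TVVU$ lie in $\scrA = \mathrm{colspan}(\VDM)$, one gets directly $\TVVU = \TVVU \bullet_3 (\VDM\VDM^{\dagger})^{\top} = \TDIS \bullet_3 (\VDM^{\dagger})^{\top}$, so the two-way mode-$3$ identities immediately yield rank equality and the bijection of minimal CP decompositions without a separate argument about where arbitrary third-mode factors live.
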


\begin{proof}[Proof of Lemma~\ref{lem:zk_and_hk}]
Let us express $g'_k(\bfv_k^\top \bfu)$ in an explicit form.
First, $g'_{k}(t) = c_{k,1} + 2 c_{k,2} t + 3 c_{k,3} t^2 + \cdots +  d c_{k,d} t^{d-1}$, from which it follows that
\[
g'_{k}(\bfv_k^\top \bfu) = c_{k,1} + 2 c_{k,2}  \bfv_k^\top \bfu + 3 c_{k,3}  ( \bfv_k^\top \bfu )^2 + \cdots +d c_{k,d}  ( \bfv_k^\top \bfu)^{d-1}.
\]
Since $( \bfv^{\top} \bfu )^{s} = (\bfv \otimes \cdots \otimes \bfv)^{\top} (\bfu \otimes \cdots \otimes \bfu)$,  the $j$-th element of $\bfh_{k}$ is equal to
\[
    (\bfh_{k})_j = h_{j,k} = g'_{k}(\bfv_k^\top \bfu^{(j)})  = \VDM_{:,k}^{\top} \bfz_k,
\]
which completes the proof.
\end{proof}

\begin{proof}[Proof of Theorem~\ref{prop:TDIS_TVVU_EQ}] 
1. First, any polynomial map $\bff$ can be decomposed as \eqref{eq:decoupled} with $r$ sufficiently large.
Let us take such a decomposition; then it holds that
\[
(\TVVU) \bullet_3 \VDM^{\top} = \left(\sum\limits_{k=1}^{r} \bfw_k \circ \bfv_k \circ  \bfz_k\right) \bullet_3 \VDM^{\top}  = \sum\limits_{k=1}^{r} \bfw_k \circ \bfv_k \circ  \VDM^{\top} \bfz_k = \TDIS,
\]
where the last equality follows from \eqref{eq:TDIS_TVVU_CP_EQ}.

\noindent 2. By construction, each element in the image of $\VDM$ lies in the following subspace:
\begin{equation}\label{eq:struct_vdm_rows}
\begin{split}
\scrA := \{ & \bmx a_0 &  \bfa_1^{\top} & \bfa_2^{\top} & \cdots & \bfa_{d-1}^{\top}\emx^{\top} \in \bbR^{\delta} | \\
& \bfa_k \in \bbR^{n^k} \mbox{ is a vectorization of a symmetric }m \times \cdots \times m \mbox{ tensor}. \}
\end{split}
\end{equation}
Taking into account that the dimension of the space of $m \times \cdots \times m$ symmetric tensors of order $s$ is $\binom{m+s-1}{s}$,
we get that the maximal possible  rank of $\VDM$ is 
\[
1+m+\binom{m+1}{2}+\dots \binom{m+d-2}{d-1} =M.
\]
Next, from \eqref{eq:VDM}, we have that the $k$-th column contains evaluations of all $M$ monomials $\{u^{j_1}_1 \cdots u^{j_m}_m \le d-1 \}^{j_1+\cdots+j_m \le d}_{j_1,\ldots,j_m=0}$ at a point $\bfu^{(k)}$ (scaled by a constant).
If, without loss of generality, the first $M$ points $\{\bfu^{(k)}\}_{k=1}^{M}$ are in general position, then the columns of $\bfA$ corresponding to different monomials are linearly independent by \cite[Multiplicity One Theorem]{Miranda1999}, hence $\rrank \bfA = M$.

\noindent 3.
Note that each tube $\TVVU_{i,j,:}$ of the tensor $\TVVU$, by construction, lies in $\scrA$.
If $\rrank \bfA = M$, then its row span coincides with $\scrA$. 
Hence the following identity holds true:
\begin{equation}\label{eq:TVVU_TDIS_EQ}
\TDIS \bullet_3 (\VDM^{\dagger})^{\top}=  \TVVU \bullet_3 (\VDM \VDM^{\dagger})^{\top} = \TVVU. 
\end{equation}
The remaining properties follow from \eqref{eq:TVVU_TDIS_EQ} and \eqref{eq:TDIS_TVVU_EQ}.
\end{proof}

\section{Structured tensor decompositions}
\label{sec:structured}
\subsection{From CPD to a decomposition with structured rank-one terms} 
The CP decomposition of $\TDIS$ and $\TVVU$, although related, are not always equivalent to the original decomposition \eqref{eq:decoupling_additive}.
This happens because there are still nontrivial linear dependencies between  the elements of $\TVVU$ and $\TDIS$. 
In what follows, we establish relationships between the CP decompositions and the original decomposition \eqref{eq:decoupling_additive}.

First, we prove that for the rank-one case, these decompositions coincide.
\begin{proposition}\label{prop:rank_one}
Consider a polynomial map $\bff(\bfu)$ of degree $d$, and 
the tensor $\TVVU$ built from it.
Then the following holds
\[
\rrank (\TVVU) \le 1 \iff \bff(\bfu) = \bfw g(\bfv^{\top}\bfu),
\]
where $\bfw \in \bbR^{n}$, $\bfv \in \bbR^{m}$ and $g(t)$ is a polynomial of degree $d$.
\end{proposition}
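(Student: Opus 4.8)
The plan is to prove the two implications separately; the direction ``$\Leftarrow$'' is immediate from Lemma~\ref{lem:VVU_CPD}, so essentially all of the content lies in ``$\Rightarrow$''. For ``$\Leftarrow$'': if $\bff(\bfu)=\bfw\,g(\bfv^{\top}\bfu)$, then this is precisely the decoupled representation \eqref{eq:decoupling_additive} in the degenerate case $r=1$ (with $\bfW=\bfw$, $\bfV=\bfv$, $g_1=g$), so Lemma~\ref{lem:VVU_CPD} gives $\TVVU=\bfw\circ\bfv\circ\bfz$ with $\bfz$ as in \eqref{eq:z_k}; being a sum of at most one rank-one term, $\rrank(\TVVU)\le1$.

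For the converse, assume $\rrank(\TVVU)\le1$. If $\TVVU=\bfzero$ then $\bff\equiv\bfzero$ and we may take $g\equiv0$; otherwise write $\TVVU=\bfw\circ\bfv\circ\bfz$ with $\bfw,\bfv,\bfz\neq\bfzero$. Reading off the $i$-th horizontal slice $\TVVU_{i,:,:}=\Psi(f_i)$ gives $\Psi(f_i)=w_i\,\bfv\bfz^{\top}$ for every $i$. I would fix an index $i_0$ with $w_{i_0}\neq0$ and set $p:=f_{i_0}/w_{i_0}$, so that $\Psi(p)=\bfv\bfz^{\top}$ is a rank-one matrix with left factor $\bfv$. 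Partitioning $\bfz^{\top}=\bigl[\,\gamma\mid\bfb_2^{\top}\mid\cdots\mid\bfb_d^{\top}\,\bigr]$ conformally with the block structure \eqref{eq:def_Psi} of $\Psi$, one reads off that $\HomT{1}(p)=\gamma\,\bfv$ and that, for $2\le s\le d$, the first-mode unfolding of the symmetric tensor $\HomT{s}(p)$ equals $\bfv\,\bfb_s^{\top}$.

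The crux is then the following claim: \emph{a symmetric order-$s$ tensor $\calT$ whose first-mode unfolding is of the form $\bfv\bfc^{\top}$ with $\bfv\neq\bfzero$ must equal $\alpha\,\bfv^{\circ s}$ for some scalar $\alpha$.} I would prove this by induction on $s\ge2$. For $s=2$, symmetry of the rank-one matrix $\bfv\bfc^{\top}$, i.e.\ $\bfv\bfc^{\top}=\bfc\bfv^{\top}$, forces $\bfc$ to be a scalar multiple of $\bfv$ (compare the column indexed by a coordinate at which $\bfv$ is nonzero). For $s\ge3$, the unfolding identity says $\calT_{i,j_2,\dots,j_s}=v_i\,\calC_{j_2,\dots,j_s}$ for an order-$(s-1)$ tensor $\calC$; full symmetry of $\calT$ implies first that $\calC$ is itself symmetric, and then, by comparing modes $1$ and $2$ of $\calT$ and dividing by a nonzero coordinate of $\bfv$, that the first-mode unfolding of $\calC$ is again of the form $\bfv(\,\cdot\,)^{\top}$; the induction hypothesis applied to $\calC$ gives $\calC=\beta\,\bfv^{\circ(s-1)}$, hence $\calT=\bfv\circ\calC=\beta\,\bfv^{\circ s}$.

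Applying the claim to each $\HomT{s}(p)$ and setting $\alpha_1:=\gamma$, we obtain $\HomT{s}(p)=\alpha_s\,\bfv^{\circ s}$ for $1\le s\le d$; substituting into \eqref{eq:poly_symt_form} and using $\HomT{s}\mmult{1}\bfu\cdots\mmult{s}\bfu=\alpha_s(\bfv^{\top}\bfu)^s$ yields $p(\bfu)=\sum_{s=1}^{d}\alpha_s(\bfv^{\top}\bfu)^s=g(\bfv^{\top}\bfu)$ with $g(t):=\sum_{s=1}^{d}\alpha_s t^s$ of degree at most $d$. Since $\Psi(f_i)=w_i\,\Psi(p)=\Psi(w_ip)$ and $\Psi$ is injective on polynomials without constant term (it records all of their coefficients), $f_i=w_ip$, i.e.\ $\bff(\bfu)=\bfw\,g(\bfv^{\top}\bfu)$. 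The only real obstacle is the symmetric-tensor claim above; its one subtlety is that $\bfv$ may have zero entries, which is why the argument divides by a fixed nonzero coordinate of $\bfv$ throughout rather than treating $\bfv$ as invertible — everything else is bookkeeping with the block structure of $\Psi$.
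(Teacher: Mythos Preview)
Your proposal is correct and follows essentially the same route as the paper: both directions are handled the same way, and for ``$\Rightarrow$'' both you and the paper reduce to a single polynomial $p$ with $\Psi(p)=\bfv\bfz^{\top}$, then argue that each symmetric block $\HomT{s}$ must be $\alpha_s\,\bfv^{\circ s}$. The only substantive difference is that the paper merely asserts the implication ``symmetric tensor with rank-one first-mode unfolding $\Rightarrow$ scalar multiple of $\bfv^{\circ s}$'', whereas you supply an inductive proof of it (and you are also explicit about the zero case and the injectivity of $\Psi$); this makes your argument more self-contained but not genuinely different in strategy.
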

\begin{proof}
The $\boxed{\Leftarrow}$ follows from \Cref{lem:TVVU_decoupled_cp}.
Let us prove the $\boxed{\Rightarrow}$ part. Assume that %$\TVVU$ has  the decomposition
\[
\TVVU = \bfw \circ \bfv \circ \bfy.
\]
First, since the tensor $\TVVU$ contains all the coefficients of the derivatives, we have that there exists a polynomial $\widetilde{f}(\bfu)$ such that $\nabla f_k (\bfu) = (\bfw)_k \nabla \widetilde{f}(\bfu)$.
Since the polynomials $f_k(\bfu)$ do not have constant terms, we have that
\[
\bff(\bfu) = \bfw \widetilde{f}(\bfu),
\]
where $\Psi(\widetilde{f}) = \bfv  \bfy^{\top}$.

{Next, let us show that the  polynomial $\widetilde{f}$ should necessarily the form $\widetilde{f}(\bfu) = {g}(\bfv^{\top} \bfu)$. Since $\Psi(\widetilde{f}) = \bfv  \bfy^{\top}$, then it follows from \eqref{eq:def_Psi} that all the unfoldings $\HomT{1}$,  $\HomT{2}$, $\HomT{3}_{(1)}, \cdots, \HomT{d}_{(1)}$ have rank at most one and their column space is spanned by the vector $\bfv$.
Therefore, we have that
\[
\begin{split}
& \HomT{1} = c_1 \bfv, \\
& \HomT{2} = c_2 \bfv \bfv^{\top}, \\
& \HomT{3} = c_3 \bfv \circ \bfv \circ \bfv, \\
& \vdots \\
& \HomT{d} = c_d \bfv \circ \cdots \circ \bfv, \\
\end{split}
\]
and hence $\widetilde{f}(\bfu) = {g}(\bfv^{\top} \bfu)$ where 
\[
{g}(t) = {c}_1 t + {c}_2 t^2 +  \cdots + {c}_d t^d,
\]
which completes the proof.}
\end{proof}

{\begin{remark}
The fact that $\rrank \Psi(\widetilde{f}) \le  1$ implies  $\widetilde{f}(\bfu) = g(\bfv^{\top} \bfu)$ also can be proved alternatively, by noting that the matrix $\Psi{\widetilde{f}}$, after removing duplicate columns, can be reduced to the form $S(f)$ in \cite[Proposition 22]{comon2017xrank}.
Hence, by \cite[Proposition 4.1]{comon2017xrank}, the polynomial $\widetilde{f}$ has necessarily the form $\widetilde{f}(\bfu) = g(\bfv^{\top} \bfu)$.
However, this alternative proof requires introducing extra notation, which would be much longer that the proof presented in this paper.
\end{remark}}

\begin{corollary}
    If the $N$ sampling points are chosen such that the  $\rrank (\VDM) = M$, then
\[
\rrank (\TDIS) \le 1 \iff \bff(\bfu) = \bfw g(\bfv^{\top}\bfu).
\]
\end{corollary}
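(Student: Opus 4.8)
The plan is to deduce the statement directly from two facts already at our disposal: part~(3) of \Cref{prop:TDIS_TVVU_EQ} and \Cref{prop:rank_one}. The hypothesis is precisely $\rrank(\VDM)=M$, so \Cref{prop:TDIS_TVVU_EQ}(3) applies and yields $\rrank(\TDIS)=\rrank(\TVVU)$. Consequently $\rrank(\TDIS)\le 1$ holds if and only if $\rrank(\TVVU)\le 1$, since the condition ``rank at most one'' depends only on the common numerical value of the CP rank (a tensor has rank $\le 1$ exactly when it is the zero tensor or a single rank-one term).

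It then remains to invoke \Cref{prop:rank_one}, which gives $\rrank(\TVVU)\le 1\iff \bff(\bfu)=\bfw\,g(\bfv^{\top}\bfu)$ for some $\bfw\in\bbR^{n}$, $\bfv\in\bbR^{m}$ and a univariate polynomial $g$ of degree $d$. Composing the two equivalences produces $\rrank(\TDIS)\le 1\iff \bff(\bfu)=\bfw\,g(\bfv^{\top}\bfu)$, which is the claim.

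There is no real obstacle here; the only point that might deserve an explicit line is the transfer of the ``$\le 1$'' property across the rank equality, and even that can be made self-contained without appealing to \Cref{prop:TDIS_TVVU_EQ}(3): using $\TDIS=\TVVU\bullet_3\VDM^{\top}$ from \eqref{eq:TDIS_TVVU_EQ}, a rank-one $\TVVU=\bfw\circ\bfv\circ\bfy$ maps to $\TDIS=\bfw\circ\bfv\circ(\VDM^{\top}\bfy)$, which has rank $\le 1$; conversely, since $\rrank(\VDM)=M$ the identity \eqref{eq:TVVU_TDIS_EQ} lets one pull a rank-$\le 1$ (or zero) $\TDIS$ back to a rank-$\le 1$ $\TVVU$. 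I would nonetheless present the short argument through \Cref{prop:TDIS_TVVU_EQ}(3) together with \Cref{prop:rank_one}, as it is the cleanest route and reuses machinery already set up.
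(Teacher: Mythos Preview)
Your proposal is correct and follows exactly the route the paper intends: the corollary is stated immediately after \Cref{prop:rank_one} without proof, and combining \Cref{prop:rank_one} with the rank equality $\rrank(\TDIS)=\rrank(\TVVU)$ from \Cref{prop:TDIS_TVVU_EQ}(3) under the hypothesis $\rrank(\VDM)=M$ is precisely the intended (implicit) argument. Your alternative direct check via \eqref{eq:TDIS_TVVU_EQ} and \eqref{eq:TVVU_TDIS_EQ} is also fine and simply unpacks what \Cref{prop:TDIS_TVVU_EQ}(3) already packages.
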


As a corollary of Proposition~\ref{prop:rank_one}, we get that the original polynomial decomposition~(\ref{eq:decoupling_additive}) is equivalent to a structured CP decomposition.
\begin{corollary}
Let $\LVVU \subset \bbR^{n\times m\times \delta}$ be the linear subspace of tensors with the structure of $\TVVU$. 
Let the sampling points be chosen such that $\rrank \VDM = M$, and $\LDIS \subset \bbR^{n\times m\times N}$ be the linear subspace of tensors with the structure of $\TDIS$. 

Then the following three statements are equivalent:
\begin{enumerate}
\item the polynomial map $\bff(\bfu)$ admits a decomposition \eqref{eq:decoupling_additive};
\item the tensor $\TVVU(\bff)$ admits the structured CP decomposition
\begin{equation}\label{eq:structured_CPD_TVVU}
    \TVVU = \TVVU_1 + \cdots + \TVVU_r, \quad \rrank (\TVVU_k) = 1,\quad  \TVVU_k \in  \LVVU;
\end{equation}
\item the tensor $\TDIS(\bff)$ admits the structured CP decomposition
\begin{equation}\label{eq:structured_CPD_TDIS}
    \TDIS = \TDIS_1 + \cdots + \TDIS_r, \quad \rrank (\TDIS_k) = 1,\quad  \TDIS_k \in  \LDIS.
\end{equation}
\end{enumerate}
\end{corollary}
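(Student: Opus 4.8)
The strategy is to prove the cycle of implications $(1)\Rightarrow(2)\Rightarrow(3)\Rightarrow(1)$, leaning on \Cref{lem:TVVU_decoupled_cp}, \Cref{prop:TDIS_TVVU_EQ}, and \Cref{prop:rank_one}. The implication $(1)\Rightarrow(2)$ is essentially \Cref{lem:TVVU_decoupled_cp}: if $\bff$ admits \eqref{eq:decoupling_additive}, then $\TVVU = \sum_{k=1}^r \bfw_k \circ \bfv_k \circ \bfz_k$, and each term $\TVVU_k := \bfw_k \circ \bfv_k \circ \bfz_k$ is rank one; moreover each $\TVVU_k$ equals $\TVVU(\bfw_k g_k(\bfv_k^\top \bfu))$, which is a valid tensor of the $\TVVU$ type, so $\TVVU_k \in \LVVU$. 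Thus \eqref{eq:structured_CPD_TVVU} holds.

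For $(2)\Rightarrow(3)$, apply the mode-$3$ multiplication by $\VDM^\top$ from \Cref{prop:TDIS_TVVU_EQ}(1). Since this operation is linear and acts only on the third mode, it maps a rank-one tensor in $\LVVU$ to a rank-one (or zero) tensor; but in fact $\VDM^\top$ restricted to $\scrA$ is injective when $\rrank\VDM = M$ (as shown in the proof of \Cref{prop:TDIS_TVVU_EQ}), so each $\TDIS_k := \TVVU_k \bullet_3 \VDM^\top$ is genuinely rank one, and $\TDIS_k \in \LDIS$ because $\TDIS_k = \TDIS(\bfw_k g_k(\bfv_k^\top\bfu))$ by \eqref{eq:TDIS_TVVU_EQ} applied to the single term. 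Summing over $k$ and using $\TDIS = \TVVU \bullet_3 \VDM^\top$ gives \eqref{eq:structured_CPD_TDIS}. The reverse direction $(3)\Rightarrow(2)$ is symmetric, using instead the identity $\TVVU = \TDIS \bullet_3 (\VDM^\dagger)^\top$ from \eqref{eq:TVVU_TDIS_EQ}, which is available precisely because $\rrank\VDM = M$; one must check that this also sends $\LDIS$ into $\LVVU$ and preserves rank-one terms, which again follows from the fact that on the relevant subspace the two mode-$3$ maps are mutually inverse bijections.

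Finally, $(3)\Rightarrow(1)$ (or equivalently $(2)\Rightarrow(1)$) is where the real content lies: given a structured rank-one decomposition, each summand $\TVVU_k \in \LVVU$ has rank one, so by \Cref{prop:rank_one} the corresponding polynomial map $\bff_k$ — the one whose $\TVVU$-tensor is $\TVVU_k$ — has the form $\bff_k(\bfu) = \bfw_k g_k(\bfv_k^\top\bfu)$. Because $\Psi$ (hence the map $\bff \mapsto \TVVU(\bff)$) is linear and injective on polynomial maps without constant terms, $\TVVU = \sum_k \TVVU_k$ forces $\bff(\bfu) = \sum_k \bfw_k g_k(\bfv_k^\top\bfu)$, which is exactly \eqref{eq:decoupling_additive}. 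For the $\TDIS$ version one uses the \Cref{prop:rank_one} corollary instead, noting that $\rrank\VDM = M$ guarantees that $\TDIS_k$ rank one implies the same polynomial structure.

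\textbf{Main obstacle.} The one delicate point is the injectivity of the reconstruction $\TVVU_k \leftrightarrow \bff_k$: a rank-one tensor $\TVVU_k \in \LVVU$ need not \emph{a priori} be of the form $\TVVU(\bff_k)$ for some polynomial map $\bff_k$, but it is, simply because $\LVVU$ is \emph{defined} as the image of $\Psi$ (tensor of unfoldings) applied to polynomial maps, so the correspondence is built in; one just has to state this carefully so that the linearity-plus-injectivity argument collapsing $\sum_k\TVVU_k$ back to $\sum_k\bff_k$ is rigorous. The analogous step for $\TDIS$ additionally requires the $\rrank\VDM = M$ hypothesis to ensure the map $\bff \mapsto \TDIS(\bff)$ remains injective on the relevant space (otherwise sampling could lose information); this is exactly what \eqref{eq:TVVU_TDIS_EQ} provides.
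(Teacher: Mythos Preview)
Your proposal is correct and follows exactly the route the paper intends. The paper does not spell out a proof of this corollary at all --- it simply presents it as ``a corollary of \Cref{prop:rank_one}'' --- and your argument is the natural unpacking of that claim: \Cref{lem:TVVU_decoupled_cp} for $(1)\Rightarrow(2)$, \Cref{prop:TDIS_TVVU_EQ} (specifically \eqref{eq:TDIS_TVVU_EQ} and \eqref{eq:TVVU_TDIS_EQ}) for $(2)\Leftrightarrow(3)$, and \Cref{prop:rank_one} together with linearity and injectivity of $\bff\mapsto\TVVU(\bff)$ for $(2)\Rightarrow(1)$.
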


The structure constraint is important: indeed, the CP decomposition of the tensor $\TVVU$ or $\TDIS$ is not necessarily structured.
In general, we do not know even if the CP rank is equal to the structured CP rank (minimal number of terms in (\ref{eq:structured_CPD_TVVU}) or~(\ref{eq:structured_CPD_TDIS})).
This is similar to the Comon's conjecture \cite[\S 5]{Comon.etal08SJMAA-Symmetric} about symmetric tensors: it is not known whether the symmetric rank of a symmetric tensor equals its non-symmetric rank. 

However, if the CP decomposition of a tensor is unique (for example, if it satisfies Kruskal's uniqueness conditions), then it should necessarily be a structured CP decomposition.

\subsection{Computing coupled/structured CP decomposition}\label{sec:simcoupsymCPD}
Earlier attempts to tackle the structured case were made by~\cite{Tiels.Schoukens14conf-coupled,Schoukens.etal14conf-Identification} and \cite[\S 8, pp.~133--136]{GabrielPhD}. 
The attempts of~\cite{Tiels.Schoukens14conf-coupled,Schoukens.etal14conf-Identification} have the disadvantage that a tensor is built that has missing values, which increase in number as the polynomial degree grows. 
The attempt of~\cite{GabrielPhD} consisted of parameterizing the internal { nonlinear functions $g_k$} using their coefficients. 
Although this seems a promising approach, it turned out to be problematic in practice to build a working algorithm, as the {  decoupling} method led to strongly nonlinear/nonconvex optimization problems.

We propose to tackle the problem by solving a coupled and structured CP decomposition instead. 
First, let us consider simultaneous decoupling of homogeneous polynomials \eqref{eq:decoupling_sim_HomT}.
Let us arrange the $\HomT{d}_i$, for $i = 1,\ldots,n$ into a tensor $\calT^d$, such that $\calT^d_{i,:,\ldots,:} = \HomT{d}_i$, for all $i = 1,\ldots,n$.
Then it is easy to verify that $\calT^d$ admits a partially symmetric CP decomposition 
\[
\calT^d = \llbracket \bfW, \underbrace{\bfV, \ldots, \bfV}_{\mbox{$d$ times}} \rrbracket,
\]
which, in our decoupled representation (\ref{eq:decoupled}), takes the form
$ \bfW \bfg (\bfV^\top \bfu)$,
where $\bfg(\bfx) = \left[ \begin{array}{ccc} x_1^d & \cdots & x_r^d \end{array} \right]^\top$.%, and $\bfW$ serves to make a linear combination of all the $r$ internal branches for each of the $n$ function components.  

Decoupling non-homogeneous polynomials can be achieved by means of a coupled structured CP decomposition of the $\calT^d$ tensors.
Let us arrange all $\HomT{d}_i$, for $i = 1,\ldots,n$ into the tensors $\calT^d$ (like in the previous paragraph), such that $\calT^d_{i,:,\ldots,:} = \HomT{d}_i$, for all $i = 1,\ldots,n$. 
We now have for each degree a coupled partially symmetric CP decomposition as  
\begin{equation}
    \begin{array}{rcl}
        \calT^1 &=& \llbracket \bfW, \bfV, \bfc_1^\top \rrbracket,\\
        \calT^2 &=& \llbracket \bfW, \bfV, \bfV, \bfc_2^\top \rrbracket,\\
        &\vdots&  \\
        \calT^d &=& \llbracket \bfW, \underbrace{\bfV, \ldots, \bfV}_{\mbox{$d$ times}}, \bfc_d^\top \rrbracket,
    \end{array}
    \label{eq:coupledsymCPD}
\end{equation}
where the $\bfc_i$, for $i = 1,\ldots,d$, are the $i$-th degree coefficients for each of the $r$ {nonlinear functions $g_k$}.

Remark that these coefficients were not required in the previous paragraphs when homogeneous polynomials were considered: in such cases  { the nonlinear functions $g_k$ are of the form $c_k t^d$, i.e., they differ only by a scaling factor}, which can be assumed to be fully absorbed by $\bfW$. 
Also remark that there { are redundancies} in the representation \eqref{eq:coupledsymCPD}: { for example}, an equivalent problem { can be} obtained { if one rescales a  coefficient vector $\bfc_\delta$ to a} vector containing ones, in which case a rescaling has to take place on the remaining coefficients as well as on $\bfW$. 
Finally we want to mention that the framework of structured data fusion \cite{sorber2015sdf,tensorlab3} allows for computing tensor decompositions as in \eqref{eq:coupledsymCPD}, where several tensors (and possibly matrices) are jointly decomposed while sharing factors, possibly while imposing structure on the factors.

{
\begin{example}
Let us continue with Examples~\ref{ex:main_example},~\ref{ex:main_example_TDIS} and~\ref{ex:Hmatrix}. 
We have that $m=n=2$ and $r=3$, which does not guarantee a unique CP decomposition of $\TDIS$ (under assumptions of genericity, see \cite{Dreesen.etal14-Decoupling}). 
Indeed, if we compute a numerical CP decomposition of tensor $\TDIS$, we find that, up to a relative norm-wise error $2.3546 \times 10^{-16}$, $\TDIS$ admits a CP decomposition with factors 
\[ 
\begin{array}{rcl}
\tilde{\bfW} &=& 
\left[ 
\begin{array}{rrr}
    1.1628 &  -3.2951 &   3.0252 \\
    0.5705 &   1.1349 &  -2.1791 \\
\end{array}
\right],
\quad
\tilde{\bfV} =
\left[ 
\begin{array}{rrr}
    3.5822 & -0.7705 &  -2.2959\\
   -0.0226 & -3.4455 &  -2.9785\\
\end{array}
\right], 
\\ \\
\tilde{\bfH} &=&
\left[ 
\begin{array}{rrr}
    0.2736 &   0.8181 &   0.0312\\
   -3.3900 &   0.2647 &   1.2313\\
    0.5334 &  -3.9194 &   3.4945\\
\end{array}
\right], \\
\end{array}
\]
the columns of which are not scaled and permuted versions of the columns of $\bfW$, $\bfV$, $\bfH$. 

It can be shown that the `structured CP' approaches are able to correctly return the underlying factors $\bfW$, $\bfV$ and $\bfH$ (up to scaling and permutation invariances). 
For instance, the structured data fusion framework \cite{sorber2015sdf,tensorlab3} is able to compute the coupled and partially symmetric decomposition (\ref{eq:coupledsymCPD}).
This returns 
\[ 
\begin{array}{rcl}
\tilde{\bfW} &=&
\left[ \begin{array}{rrr} 
    1.2767  &  1.7112 &   0\\
         0  & -0.8556 &  -1.9980
\end{array} \right], \quad
\tilde{\bfV} = 
\left[ \begin{array}{rrr} 
   -0.9218 &  -1.0534  &  1.5879 \\
    0.9218 &  -2.1067  &  0.7940 
\end{array} \right], 
\end{array}
\]
    as well as computed values for the coefficient vectors of $g_i(x_i)$, which are omitted here.
It can be verified that $\tilde\bfW$ and $\tilde\bfV$ are scaled and permuted versions of  $\bfW$ and $\bfV$.

Remark that if one uses $m=n=r=2$, both the structured and non-structured CP decomposition return the same decomposition (up to scaling and permutation of the columns of the factors). 
Indeed, in this case, uniqueness is guaranteed (generically), ensuring that the underlying factors are identifiable.
    This could be checked easily by generating a variation of the equations that we are decoupling where the third columns of $\bfV$ and $\bfW$ are removed, so that $g_3(x_3)$ is not considered.  
\end{example}

{
\subsection{Linking $\TVVU$ and $\calT^1,\ldots,\calT^{d}$ tensors}
In this section, we show how $\TVVU$  and its CPD is connected with the tensors $\calT^s$ and their joint decomposition \eqref{eq:coupledsymCPD}.
Let $(1,2)$-reshapings of the tensors $\calT^s$ to be the third order tensors $\calT^s_{(1,2)} \in \bbR^{n\times m \times m^{s-1}}$ defined as
\[
(\calT^s_{(1,2)})_{i,j,:} = \mvec(\calT^s_{i,j,:,\cdots,:}).
\]
Then it is easy to see that the tensor $\calT^s_{(1,2)}$  can be split into slices as shown in Fig.~\ref{fig:Ts_slices}, where the $\HomT{s,j}$ is the symmetric tensor corresponding to the $s$-th degree homogeneous part of the polynomial $f_k$.

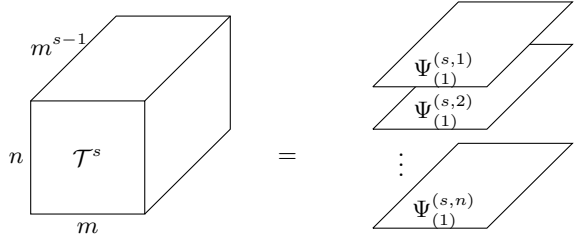
\begin{figure}[htb!]
\centering
\begin{tikzpicture}
\begin{scope}[scale=0.75]
\draw(-6,-1) rectangle (-4,1);
\draw(-6,1) -- (-4.5,2.5) -- (-2.5,2.5) -- (-2.5,0.5) --(-4,-1);
\draw(-4,1) -- (-2.5,2.5);

\draw(-1.5,0) node (D) {\small $=$};
\draw(-5,0) node (A) {\small $\calT^{s}$};

\draw(0.5,0) node (D) {\small $\vdots$};

\draw[fill=white](0,-1.25) -- (1.5,0.25) -- (3.5,0.25) -- (2,-1.25) -- (0,-1.25);
\draw[fill=white](0,0.5) -- (1.5,2) -- (3.5,2) -- (2,0.5) -- (0,0.5);
\draw[fill=white](0,1.25) -- (1.5,2.75) -- (3.5,2.75) -- (2,1.25) -- (0,1.25);

\draw(1.25,1.55) node (B) {\small $\HomT{s,1}_{(1)}$};
\draw(1.25,0.8) node (B) {\small $\HomT{s,2}_{(1)}$};
\draw(1.25,-0.95) node (B) {\small $\HomT{s,n}_{(1)}$};

\draw(-5,-1.25) node (D) {\small $m$};
\draw(-6.25,0) node (D) {\small $n$};
\draw(-5.5,2) node (D) {\small $m^{s-1}$};

\end{scope}
\end{tikzpicture}
    \caption{The slices of the tensor $\calT^{s}$ are the unfoldings of the symmetric tensors corresponding to $s$-degree homogeneous parts of polynomials $f_1,\ldots,f_n$.}\label{fig:Ts_slices}
\end{figure}

By taking into account the definition \eqref{eq:def_Psi} of the slices of the tensor $\TVVU$, we can easily see that $\TVVU$ can be constructed by stacking the tensors is equivalent to reshaping the tensors $\calT^s_{(1,2)}$ along the third mode together, as shown in Fig.~\ref{fig:TVVU_Ts}.

\begin{figure}[htb!]
\centering
\begin{tikzpicture}
\begin{scope}[scale=0.75]
\draw(-6,-1) rectangle (-4,1);
\draw(-6,1) -- (-3,4) -- (-1,4) -- (-1,2) --(-4,-1);
\draw(-4,1) -- (-1,4);

\draw(-1.5,0) node (D) {\small $=$};
\draw(-5,0) node (A) {\small $\TVVU$};

\draw(0,-1) rectangle (2,1);
\draw(0,1) -- (3,4) -- (5,4) -- (5,2) --(2,-1);
\draw(2,1) -- (5,4);

\draw(1.2,2.2) -- (3.2,2.2) -- (3.2,0.2) ;
\draw(0.2,1.2) -- (2.2,1.2) -- (2.2,-0.8) ;
\draw(1.8,2.8) -- (3.8,2.8) -- (3.8,0.8) ;

\draw(1,0) node (A) {\small $\calT^{1}_{(1,2)}$};
\draw(2.7,0.6) node (A) {\small $\calT^{2}_{(1,2)}$};
\draw(3.45,1.7) node (A) {\small $\iddots$};
\draw(4.5,2.4) node (A) {\small $\calT^{d}_{(1,2)}$};

\draw(-5,-1.25) node (D) {\small $m$};
\draw(-6.25,0) node (D) {\small $n$};
\draw(-4.5,3) node (D) {\small $\delta$};

\end{scope}
\end{tikzpicture}
    \caption{Stacking the reshapings of $\calT^{s}$ together.}\label{fig:TVVU_Ts}
\end{figure}
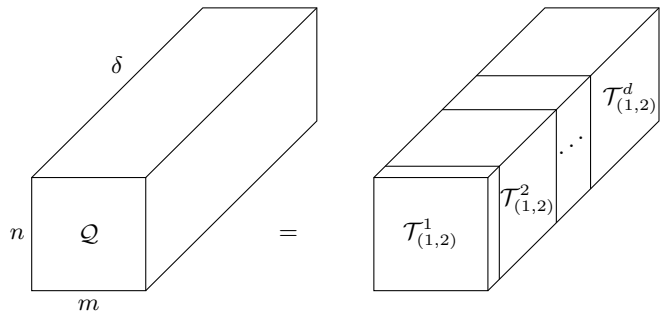

Remark that in \Cref{lem:VVU_CPD} we see that the structure appearing in the CP decomposition of $\TVVU$ is closely connected to the simultaneous decomposition described in \Cref{sec:simcoupsymCPD}.
Indeed, \Cref{lem:VVU_CPD} can be alternatively deduced from \eqref{eq:coupledsymCPD}, because the outer products of vectors become Kronecker products after reshaping. 
 }

%TODO: Explain that we can view $\TVVU$ as a concatenation of $\calT^d$ tensors (skype figure of Kostya on whiteboard). 

\section{Conclusions and {perspectives}}\label{sec:conclusions}
% Findings
We { have} established a link between two tensorization approaches for decoupling multivariate polynomials \cite{VanMulders.etal14conf-Identification,Dreesen.etal14-Decoupling}:
the tensor of Jacobian matrices \cite{Dreesen.etal14-Decoupling} can be obtained by multiplying the coefficient-based tensor \cite{VanMulders.etal14conf-Identification} by a Vandermonde-like matrix.
As revealed by this connection, the two approaches have similar fundamental properties, such as equal tensor rank { and uniqueness of the CP decomposition under conditions on the number and location of the sampling points. }

{
The decoupling problem, however, is not equivalent to the CP decomposition of one of the tensors. 
This may lead to  loss of uniqueness and identifiability of the CP decomposition, in the cases when the original decomposition is still unique.
We have shown that by adding structure to the CP decomposition we can obtain equivalence between tensor decomposition and decoupling problems for polynomials.
The structure can be imposed either as a joint decomposition of partially symmetric tensors, or can be imposed on rank-one factors.
Numerical experiments confirm that using structured decompositions can restore uniqueness of the polynomial decoupling.
 }

%It is well-known that tensors often have a unique CP decomposition, which is a sufficient condition for guaranteeing a uniquely identifiable decoupled representation. 
%We extended this uniqueness property for the decoupling problem further: 
%By considering a joint CP decomposition, where the different tensor decompositions share factors, the solution space was reduced, and we observed that a larger class of decoupling problems was identifiable.  

% Conclusions
%The obtained { results} shed a light 
Although our { results show that different tensor-based approaches are very closely related, let us make some remarks on} applicability of the approaches { and some future directions}. 
%on the applicability of the two approaches. %First, properties can now be transferred from one approach to the other. For example, uniqueness aspects in one of the settings would lead to uniqueness aspects in the other. Second, as the underlying principles of the approaches are different, each of them is suitable for solving different variations and generalizations of the main decoupling problem. Thus, understanding the advantages and disadvantages of the approaches helps choosing the more suitable one. 
%We mention that, 
%for dealing with higher-order polynomials, the approach based on Jacobian matrices would be more appropriate, as the size of the involved tensor is independent of the order of the polynomials. 
For (differentiable) non-polynomial functions, the approach based on Jacobian matrices would be more appropriate, as it only uses evaluations of the derivatives of the functions. 
{ Coefficient-based approach seems  more relevant in the case when the region of interest is unclear, or } when some of the coefficients are missing or unreliable.
{ In both cases, an interesting open question remains how to impose the structure directly on the rank-one components, without resorting to coupled tensor factorizations.
Another important question is how to address the approximate decoupling problem, i.e., when we are dealing with noise (see \cite{hollander2018} for results on the unstructured case).}

\section{Acknowledgements}\noindent
This work was supported in part by the ERC AdG-2013-320594 grant ``DECODA'',
by the Flemish Government (Methusalem),
by the ERC Advanced Grant SNLSID under contract 320378, and 
by the Fonds Wetenschappelijk Onderzoek -- Vlaanderen under EOS Project no 30468160 and FWO projects G.0280.15N and G.0901.17N.

\section*{References}
\bibliographystyle{elsarticle-num}
\bibliography{tensors3}
\end{document}